      \title{Shape derivatives for the penalty formulation of contact problems with Tresca friction}
      \author{Bastien Chaudet-Dumas \\
      GIREF\thanks{Groupe Interdisciplinaire de Recherche en \'El\'ements Finis} \\
      D\'epartment de Math\'ematiques et Statistiques \\
      Universit\'e Laval \\
      Qu\'ebec, QC, Canada \\
      \texttt{bastien.chaudet.1@ulaval.ca} \\
         \And
      Jean Deteix \\   
      GIREF \\
      D\'epartment de Math\'ematiques et Statistiques \\
      Universit\'e Laval \\
      Qu\'ebec, QC, Canada \\
      \texttt{jean.deteix@mat.ulaval.ca} \\
     }
\DeclareMathAlphabet\mathpzc{T1}{pzc}{mb}{it}
\DeclareMathAlphabet{\pazocal}{OMS}{cmsy}{m}{n}
\def\norml{\left\|}
\def\normr{\right\|}
\DeclareMathOperator{\circl}{\circ\,}
\DeclareMathOperator{\divv}{div}
\DeclareMathOperator{\Divv}{\textbf{div}}
\DeclareMathOperator{\gradd}{\nabla\!}
\DeclareMathOperator{\grad}{\nabla\!}
\DeclareMathOperator{\maxx}{p_{+}}
\def\dmaxx{d\hspace{-0.1em}\maxx}
\def\dqq{d\mathbf{q}}
\DeclareMathOperator{\normalInt}{\mathbf{n}_\mathbf{o}}
\DeclareMathOperator{\tanInt}{\mathbf{t}_\mathbf{o}}
\DeclareMathOperator{\normalExt}{\mathbf{n}}
\DeclareMathOperator{\tanExt}{\mathbf{t}}
\DeclareMathOperator{\JacV}{J_\Omega}
\DeclareMathOperator{\JacB}{J_\Gamma}
\DeclareMathOperator{\hh}{\mathbf{h}}
\DeclareMathOperator{\pp}{\mathbf{p}}
\DeclareMathOperator{\qq}{\mathbf{q}}
\DeclareMathOperator{\uu}{\mathbf{u}}
\DeclareMathOperator{\vv}{\mathbf{v}}
\DeclareMathOperator{\ww}{\mathbf{w}}
\DeclareMathOperator{\zz}{\mathbf{z}}
\DeclareMathOperator{\gG}{\mathbf{g}}
\DeclareMathOperator{\Aa}{\mathbb{C}}
\DeclareMathOperator{\Kk}{\mathbf{K}}
\DeclareMathOperator{\Ll}{\mathbf{L}}
\DeclareMathOperator{\Hh}{\mathbf{H}}
\DeclareMathOperator{\Cc}{\pmb{\pazocal{C}}}
\DeclareMathOperator{\Ii}{\mathbf{I}}
\DeclareMathOperator{\Id}{\mathrm{Id}}
\DeclareMathOperator{\ff}{\mathbf{f}}
\DeclareMathOperator{\tauu}{\boldsymbol{\tau}}
\DeclareMathOperator{\epsilonn}{\boldsymbol{\epsilon}}
\DeclareMathOperator{\sigmaa}{\boldsymbol{\sigma}}
\DeclareMathOperator{\thetaa}{\boldsymbol{\theta}}
\DeclareMathOperator{\Xx}{\mathbf{X}}
\newtheorem{remark}{Remark}[section]
\newtheorem{theo}{Theorem}[section]
\newtheorem{cor}{Corollary}[section]
\newtheorem{lemma}{Lemma}[section]
\newtheorem{defn}{Definition}[section]
\newtheorem{hypothesis}{\sc Assumption}
\crefname{hypothesis}{Assumption}{Assumption}
\newtheorem*{notation}{Notation}
\crefname{theo}{Theorem}{Theorems}
\crefname{cor}{Corollary}{Corollaries}
\crefname{lemma}{Lemma}{Lemmas}
\crefname{defn}{Definition}{Definitions}
\definecolor{dartmouthgreen}{rgb}{0.05, 0.5, 0.06}
\DeclareMathOperator{\Ww}{\mathbf{W}}
\def\prodL2#1#2{\left(#1\right)_{#2}}
\begin{document}

\maketitle 

\begin{abstract}
  In this article, the shape optimization of a linear elastic body subject to frictional (Tresca) contact is investigated. Due to the projection operators involved in the formulation of the contact problem, the solution is not shape differentiable in general. Moreover, shape optimization of the contact zone requires the computation of the gap between the bodies in contact, as well as its shape derivative. Working with directional derivatives, sufficient conditions for shape differentiability are derived. 
  Then, some numerical results, obtained with a gradient descent algorithm based on those shape derivatives, are presented.
\end{abstract}

\keywords{shape and topology optimization \and unilateral contact \and frictional contact \and penalty method \and level set method}

\section{Introduction}

Optimal design is becoming a key element in industrial conception and applications. As the interest to include shape optimization in the design cycle of structures broadens, we are confronted with increasingly complex mechanical context. Large deformations, plasticity, contact and such can lead to difficult mathematical formulation. The non-linearities and/or non-differentiabilities stemming from the mechanical model give rise to complex shape sensitivity analysis which often requires a specific and delicate treatment.

This article deals with bodies in frictional (Tresca model) contact with a rigid foundation. Therefore this model is concerned with the non-penetrability and the eventual friction of the bodies in contact. From the mathematical point of view, it takes the form of an elliptic variational inequality of the second kind, see for example \cite{DuvLio1972,boieri1987existence} for existence, uniqueness, and regularity results.

Our approach to solve shape optimization problems is based on a gradient descent and Hadamard's boundary variation method, which requires the shape derivative of the cost functional. Such approaches, following the pioneer work \cite{hadamard1908memoire}, have been widely studied for the past forty years, for example in \cite{murat1975etude,simon1980differentiation,pironneau1982optimal,sokolowski1992introduction,DelZol2001,henrot2006variation}, to name a few. Obviously this raises the question of the differentiability of the cost functional with respect to the domain, which naturally leads to shape sensitivity analysis of the associated variational inequality. More specifically, as in \cite{allaire2004structural}, we use a level-set representation of the shapes, which allows to deal with changes of topology during the optimization process. Regarding more general topology optimization methods, let us mention \textit{density methods}, in which the shape is represented by a local density of material inside a given fixed domain. Among the most popular, we cite \cite{BenSig2003} for the \textit{SIMP method} (Solid Isotropic Material with Penalisation) and  \cite{allaire2012shape} for the \textit{homogenization method}.

As projection operators are involved in the formulation, the solution map is non-differentiable with respect to the shape or any other control parameter. There exist three main approaches to treat this non-differentiability.
The first one was introduced in \cite{mignot1984optimal}, where the author proves differentiability in a weaker sense, namely conical differentiability, and derives optimality conditions using this notion. We mention \cite{sokolowski1988shape} for the application of this method to shape sensitivity analysis of contact problems with Tresca friction. 
Another approach consists in discretizing the formulation, then use the tools from subdifferential calculus, see the series of papers \cite{kovccvara1994optimization,beremlijski2002shape,haslinger2012shape,beremlijski2014shape}, in the context of shape optimization for elastic bodies in frictional contact with a plane. 
The third approach, which is very popular in mechanical engineering, is to consider the penalized contact problem, which takes the form of a variational equality, then regularize all non-smooth functions. This leads to an approximate formulation having a Fréchet differentiable solution map. 
%
%
%
Following this penalty/regularization approach, we mention \cite{kim2000meshless} for two-dimensional parametric shape optimization, where the authors consider contact with a general rigid foundation and get interested in the differentiation of the gap. We also mention the more recent work \cite{maury2017shape} for shape optimization using the level set method (see \cite{allaire2004structural}), where the authors compute shape derivatives for the continuous problem in two and three dimensions, but do not take into account a possible gap between the bodies in contact. 
The same approach can be found in the context of optimal control, see among others \cite{ito2000optimal} for the general framework, and \cite{amassad2002optimal} for the specific case of frictional (Tresca) contact mechanics.

Let us finally mention the substantial work of Haslinger et al., who proved existence of optimal shapes for contact problems in some specific cases, see \cite{haslinger1985existence,haslinger1985shape}. Moreover, in  \cite{haslinger1986shape}, they proved consistency of the penalty approach in this context. 

In this paper, we aim at expressing shape derivatives for the continuous penalty formulation of frictional contact problems of Tresca type. Our approach is similar to the penalty/regularization, but we do not regularize non-smooth functions involved in the formulation. Indeed, shape differentiability does not require Fréchet-differentiability of the solution map, which makes the regularization step unnecessary.
Especially, the goal is to get similar results to \cite{maury2017shape} whitout regularizing, and extend those results in two ways.
First, we add a gap in the formulation, which enables to completely optimize the contact zone, as in \cite{kim2000meshless}. 
Second, we work in the slightly more general case where the Tresca threshold is not necessarily constant. 
This way, the formulae obtained could also be used in the context of the numerical approximation of a regularized Coulomb friction law by a fixed-point of Tresca problems. We refer to \cite{DuvLio1972,oden1983nonlocal,cocu1984existence} for existence and uniqueness results for this regularized Coulomb problem, and to \cite{hueber2008primal} (among others) for its numerical resolution by means of a fixed-point algorithm. 

This work is structured as follows. Section 2 presents the problem, its formulation and some related notations. Section 3 is dedicated to shape optimization. Especially, we express sufficient conditions for the solution of the penalty formulation to be shape differentiable (\cref{ThmExistMDer,CorExistSDer}). The shape optimization algorithm of gradient type, based on those shape derivatives, is briefly discussed. Finally, in section 5, some numerical results are exposed.

\section{Problem formulation}

\subsection{Geometrical setting}\label{sec:geo}
The body $\Omega \subset \mathbb{R}^d$, $d \in \{2,3\}$, is assumed to have $\pazocal{C}^1$ boundary, and to be in contact with a rigid foundation $\Omega_{rig}$, which has a $\pazocal{C}^3$ compact boundary $\partial\Omega_{rig}$, see \cref{SchOpen}. Let $\Gamma_D$ be the part of the boundary where a homogenous Dirichlet conditions applies (blue part), $\Gamma_N$ the part where a non-homogenous Neumann condition $\tauu$ applies (orange part), $\Gamma_C$ the potential contact zone (green part), and $\Gamma$ the rest of the boundary, which is free of any constraint (i.e. homogenous Neumann boundary condition). Those four parts are mutually disjoint and moreover: $\overline{\Gamma_D} \cup \overline{\Gamma_N} \cup \overline{\Gamma_C} \cup \overline{\Gamma} = \partial \Omega$. In order to avoid technical difficulties, it is assumed that $\overline{\Gamma_C} \cap \overline{\Gamma_D} = \emptyset$.

The outward normal to $\Omega$ is denoted $\normalInt$. Similarly, the inward normal to $\Omega_{rig}$ is denoted $\normalExt$.

\subsection{Notations and function spaces}\label{sec:esp}
Throughout this article, for any $\pazocal{O}\subset \mathbb{R}^d$, $L^p(\pazocal{O})$ represents the usual set of $p$-th power measurable functions on $\pazocal{O}$, and $\left(L^p(\pazocal{O})\right)^d = \Ll^p(\pazocal{O})$. The scalar product defined on $L^2(\pazocal{O})$ or $\Ll^2(\pazocal{O})$ is denoted (without distinction) by  $\prodL2{\cdot,\cdot}{\pazocal{O}}$ and its norm $\|\cdot\|_{0,\pazocal{O}}$. 

The Sobolev spaces, denoted $W^{m,p}(\pazocal{O})$ with $p\in [1,+\infty]$, $p$ integer are defined as 
$$
W^{m,p}(\pazocal{O}) = \left\{u\in L^p(\pazocal{O})\: : \: D^{\alpha} u \in L^p(\pazocal{O})\ \forall |\alpha|\le m\right\},
$$
where $\alpha$ is a multi-index in $\mathbb{N}^d$ and $\Ww^{m,p}(\pazocal{O})= \left(W^{m,p}(\pazocal{O})\right)^d$. The spaces $W^{s,2}(\pazocal{O})$ and $\Ww^{s,2}(\pazocal{O})$, $s\in \mathbb{R}$, are denoted $H^s(\pazocal{O})$ and $\Hh^s(\pazocal{O})$ respectively. Their norm are denoted $\|\cdot\|_{s,\pazocal{O}}$. 

The subspace of functions in $H^s(\pazocal{O})$ and $\Hh^s(\pazocal{O})$ that vanish on a part of the boundary $\gamma\subset\partial \pazocal{O}$ are denoted $H^s_\gamma(\pazocal{O})$ and $\Hh^s_\gamma(\pazocal{O})$. In particular, we denote the vector space of admissible displacements $\Xx:=\Hh^1_{\Gamma_D}(\Omega)$, and $\Xx^*$ its dual.

In order to fit the notations of functions spaces, vector-valued functions are denoted in bold. For example, $w\in L^2(\Omega)$ while $\ww\in \Ll^2(\Omega)$.

For any $v$ vector in $\mathbb{R}^d$, the product with the normal $v \cdot \normalInt$ (respectively with the normal to the rigid foundation $v \cdot \normalExt$) is denoted $v_{\normalInt}$ (respectively $v_{\normalExt}$). Similarly, the tangential part of  $v$ is denoted $v_{\tanInt} = v - v_{\normalInt} \normalInt$ (respectively $v_{\tanExt} = v - v_{\normalExt} \normalExt$).

Finally, the space of second order tensors in $\mathbb{R}^d$, i.e. the space of linear maps from $\mathbb{R}^d$ to $\mathbb{R}^d$, is denoted $\mathbb{T}^2$. In the same way, the space of fourth order tensors is denoted $\mathbb{T}^4$.
\subsection{Mechanical model}\label{sec:meca}

In this work the material is assumed to verify the linear elasticity hypothesis (small deformations and Hooke's law, see for example \cite{Cia1988}), associated with the small displacements assumption (see \cite{kikuchi1988contact}).
The physical displacement is denoted $\uu$, and belongs to $\Xx$.
The stress tensor is defined by $\sigmaa(\uu) = \Aa : \epsilonn(\uu)$, where $\boldsymbol{\epsilon}(\uu)=\frac{1}{2}(\gradd\uu+\gradd\uu^T)$ denotes the linearized strain tensor, and $\Aa$ is the elasticity tensor. This elasticity tensor is a fourth order tensor belonging to $L^\infty(\Omega,\mathbb{T}^4)$, and it is assumed to be elliptic (with constant $\alpha_0>0$). Regarding external forces, the body force $\ff \in \Ll^2(\Omega)$, and traction (or surface load) $\tauu \in \Ll^2(\Gamma_N)$. 
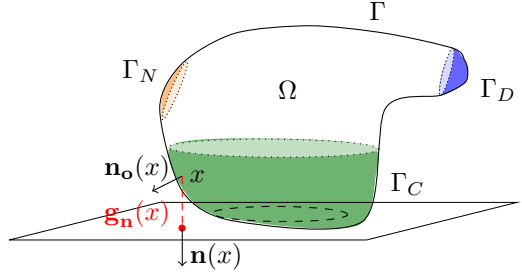
\begin{figure}
\begin{center}
\begin{tikzpicture}

\fill [blue!60] plot [smooth cycle] 
coordinates {(3.9,1.8) (4.055,1.64) (4.095,1.5) (4.055,1.34) (3.73,1.175) (3.76,1.5)};

\draw[black,fill=blue!15, thin,densely dotted,rotate=-15] (3.3,2.425) ellipse (0.05cm and .33cm);

\fill [orange!50] plot [smooth  cycle] 
coordinates {(0.2,1.22) (0.3,1.6) (0.05,1.22) (0.03,0.9) (0.15, 1.)};

\draw[black,fill=orange!15, thin,densely dotted,rotate=-23] (-0.3,1.22) ellipse (0.05cm and .42cm);

\fill [dartmouthgreen!60] plot [smooth  cycle] 
coordinates {(2.795,.50) (2.87,.1) (2.65,-.55) (1.5, -.55) (.5,-.3) (0.15,.5) (1.5, .6)};

\draw[black, fill=dartmouthgreen!25, thin,dotted, rotate=-0.5] (1.51,0.485) ellipse (1.4cm and .12cm);

\draw [black] plot [smooth cycle] 
coordinates {(0,1) (0.3, 1.6) (1,2) (2, 2.1) (3,2) (3.9,1.8) (4.055,1.64) (4.095,1.5) (4.055,1.34) (3.73,1.175) (3,1) (2.7,-.5) (1,-.5) (0.3, -.1)};

\draw[black, fill=dartmouthgreen!60,thin,dashed,rotate=-.5] (1.6,-.39) ellipse (.9cm and .1cm);

\draw[black, ultra thin] (-2,-0.75) -- (2.75,-0.75) -- (4.75,-.25) -- (0, -.25) -- (-2,-0.75);

\node[] at (2.9,2.3) {$\Gamma$};
\node[] at (4.5,1.25) {$\Gamma_D$};
\node[] at (-.25,1.5) {$\Gamma_N$};
\node[] at (3.3,0.) {$\Gamma_C$};
\node[] at (1.7,1.25) {$\Omega$};
\draw[->] (0.3, 0.1) -- (-0.1, -0.1) ;
\node[] at (0.5, 0.1) {$x$};
\node[] at (-0.3, 0.2) {$\normalInt(x)$};
\draw[red, densely dashed] (0.3, 0.1) -- (0.3, -0.6) ;
\node[red] at (0.3,-0.6) {\tiny{$\bullet$}};
\node[red] at (-0.3, -0.4) {$\gG_{\normalExt}(x)$};
\draw[->] (0.3 , -0.6) -- (0.3, -1.1) ;
\node[] at (.75, -0.97) {$\normalExt(x)$};

\end{tikzpicture}
\end{center}
  \caption{Elastic body in contact with a rigid foundation.}
  \label{SchOpen}
\end{figure}

\subsection{Non-penetration condition} At each point $x$ of $\Gamma_C$, let us define the gap $\gG_{\normalExt}(x)$, as the oriented distance function to $\Omega_{rig}$ at $x$, see \cref{SchOpen}. 
Due to the regularity of the rigid foundation, there exists $h$ sufficiently small such that
\begin{equation*}
    \partial\Omega_{rig}^h := \{ x\in\mathbb{R}^d \: : \: |\gG_{\normalExt}(x)| < h \}  \:, 
\end{equation*}
is a neighbourhood of $\partial\Omega_{rig}$ where $\gG_{\normalExt}$ is of class $\pazocal{C}^3$, see \cite{delfour1995boundary}. In particular, this ensures that $\normalExt$ is well defined on $\partial\Omega_{rig}^h$, and that $\normalExt\in \pazocal{C}^2(\partial\Omega_{rig}^h,\mathbb{R}^d)$. Moreover, in the context of small displacements, it can be assumed that the potential contact zone $\Gamma_C$ is such that $\Gamma_C \subset \partial\Omega_{rig}^h$. Hence there exists a neighbourhood of $\Gamma_C$ such that $\gG_{\normalExt}$ and $\normalExt$ are of class $\pazocal{C}^3$ and $\pazocal{C}^2$, respectively.

The non-penetration condition can be stated as follows: $\uu_{\normalExt}\leq \gG_{\normalExt}$ a.e.$\!$ on $\Gamma_C$. Thus, we introduce the closed convex set of admissible displacements that realize this condition, see \cite{eck2005unilateral}:
\begin{displaymath}
	\Kk:=\{\vv \in \Xx \: : \: \vv_{\normalExt}\leq \gG_{\normalExt} \:\:\mbox{a.e.$\!$ on}\: \Gamma_C\} .
\end{displaymath}
\subsection{Mathematical formulation of the problem}
Let us introduce the bilinear and linear forms $a : \Xx \times \Xx \rightarrow \mathbb{R}$ and $L : \Xx \rightarrow \mathbb{R}$, such that:
\begin{equation*}
  a(\uu,\vv) := \int_\Omega \Aa : \epsilonn(\uu) : \epsilonn(\vv) \:, \hspace{1.5em} L(\vv) := \int_\Omega \ff \vv + \int_{\Gamma_N} \tauu \vv \:.
\end{equation*}
According to the assumptions of the previous sections, one is able to show (see \cite{Cia1988}) that $a$ is $\Xx$-elliptic with constant $\alpha_0$ (ellipticity of $\Aa$ and Korn's inequality), symmetric, continuous, and that $L$ is continuous (regularity of $\ff$ and $\tauu$). 

The unknown displacement $\uu$ of the frictionless contact problem is the minimizer of the total mechanical energy of the elastic body, which reads, in the case of pure sliding (unilateral) contact problems:
\begin{equation}
	\underset{\vv\in \Kk}{\inf} \:\: \varphi(\vv) \: := \underset{\vv\in \Kk}{\inf} \:\: \frac{1}{2}a(\vv,\vv)-L(\vv) \:.
 	\label{OPT0}
\end{equation}
It is clear that the space $\Xx$, equipped with the usual $\Hh^1$ norm, is a Hilbert space. Moreover, under the conditions of the previous section, since $\Kk$ is obviously non-empty and the energy functional is strictly convex, continuous and coercive, we are able to conclude (see e.g.$\!$ \cite[Chapter 1]{ekeland1999convex}) that $\uu$ solution of \eqref{OPT0} exists and is unique.

It is well known that \eqref{OPT0} may be rewritten as a variational inequality (of the first kind):
\begin{equation}
	a(\uu,\vv-\uu) \:\geq\: L(\vv-\uu), \:\:\: \forall \vv \in \Kk\: .
    \label{IV0}
\end{equation}
Moreover, as $\ff\in\Ll^2(\Omega)$ and $\tauu\in \Ll^2(\Gamma_N)$, it can be shown (see \cite{DuvLio1972}) that \eqref{OPT0} and \eqref{IV0} are also equivalent to the strong formulation: 
\begin{subequations} \label{FF0:all}
    \begin{align}
    - \Divv \sigmaa(\uu) &= \ff & \mbox{in } \Omega, \label{FF0:1}\\
      \uu                &= 0   & \mbox{on } \Gamma_D,  \label{FF0:2}\\
    \sigmaa(\uu) \cdot \normalInt &= \tauu & \mbox{on } \Gamma_N,   \label{FF0:3}\\
    \sigmaa(\uu) \cdot \normalInt &= 0 & \mbox{on } \Gamma,   \label{FF0:4}\\
     \uu_{\normalExt} \leq \gG_{\normalExt}, 
     \sigmaa_{\normalInt\!\normalExt}(\uu) &\leq 0, \sigmaa_{\normalInt\!\normalExt}(\uu)(\uu_{\normalExt}-\gG_{\normalExt})=0 & \mbox{on } \Gamma_C,   \label{FF0:5}\\
    \sigmaa_{\normalInt\!\tanExt}(\uu) &= 0 & \mbox{on } \Gamma_C,  \label{FF0:6}
    \end{align}
\end{subequations}
where $\sigmaa_{\normalInt\!\normalExt}(\uu) = \sigmaa(\uu)\cdot\normalInt\cdot\normalExt$ and $\sigmaa_{\normalInt\!\tanExt}(\uu) = \sigmaa(\uu)\cdot\normalInt-\sigmaa_{\normalInt\!\normalExt}(\uu)\normalExt$ are the normal and tangential constraints on $\Gamma_C$.
\begin{remark}
	Note that existence and uniqueness of the solution to \eqref{OPT0} $\uu\in \Xx$ also holds under weaker assumptions on the data, namely $\ff\in\Xx^*$ and $\tauu\in \Hh^{-\frac{1}{2}}(\Gamma_N)$ (under the appropriate modifications in the definition of $L$). Here, we choose the minimal regularity that ensures equivalence between \eqref{IV0} and \eqref{FF0:all}. Regarding regularity results, the reader is referred to \cite{kinderlehrer1981remarks}.
\end{remark}

\begin{remark}
  Conditions \eqref{FF0:5} and \eqref{FF0:6} may seem different from the usual
  \begin{subequations} \label{CL0:all}
    \begin{align}
       \uu_{\normalInt} \leq \gG_{\normalInt}, \hspace{1em}\sigmaa_{\normalInt\! \normalInt}(\uu) &\leq 0, \hspace{1em}\sigmaa_{\normalInt\! \normalInt}(\uu)(\uu_{\normalInt}-\gG_{\normalInt})=0 & \mbox{on } \Gamma_C, \label{CL0:1}\\
      \sigmaa_{\normalInt\! \tanInt}(\uu) &= 0 & \mbox{on } \Gamma_C,  \label{CL0:2}
    \end{align}
  \end{subequations}
  where $\gG_{\normalInt}(x)$ denotes the distance between $x\in \Gamma_C$ and the rigid foundation computed in the direction of the normal $\normalInt$ to $\Gamma_C$. 
  %
  %
  Actually, since we assume the deformable body undergoes small displacements relative to its reference configuration, both sets of conditions are equivalent. 
  %
  %
  %
  More specifically, from the small displacement hypothesis, the normal vector $\normalExt$ and the gap $\gG_{\normalExt}$ to the rigid foundation can be replaced by $\normalInt$ and $\gG_{\normalInt}$ 
  (we refer to \cite[Chapter 2]{kikuchi1988contact} for the details).

  Therefore, in our context, writing the formulation associated to the contact problem using $\normalInt$ or $\normalExt$ makes absolutely no difference. We choose the latter formulation because it proves itself very convenient when dealing with shape optimization, see \cref{sec:shapeopt}.
  \label{RemHypHPP}
\end{remark}

\subsection{Friction condition}
Let $\mathfrak{F} : \Gamma_C \rightarrow \mathbb{R}$, $\mathfrak{F} > 0$, be the friction coefficient. The basis of Tresca model is to replace the usual Coulomb threshold $|\sigmaa_{\normalInt \!\normalExt}(\uu)|$ by a fixed strictly positive function $s$, which leads to the following conditions on $\Gamma_C$:
\begin{equation}
  \left\{
	\hspace{0.5em}    
    \begin{aligned}
    	|\sigmaa_{\normalInt\!\tanExt}(\uu)| \:&<\: \mathfrak{F} s & \:\: \mbox{on } \{ x \in \Gamma_C \: : \: \uu_{\tanExt}(x) = 0 \}\:, \\
    	\sigmaa_{\normalInt\!\tanExt}(\uu) \:&=\: -\mathfrak{F} s  \frac{\uu_{\tanExt}}{|\uu_{\tanExt}|} & \:\: \mbox{on } \{ x \in \Gamma_C \: : \: \uu_{\tanExt}(x) \neq 0 \}\:,
    	\label{CL1}
    \end{aligned}
  \right.    	
\end{equation}
which represent respectively \textit{sticking} and \textit{sliding} points.
\begin{remark}
	Of course, replacing the Coulomb threshold by the fixed function $s$ leads to a simplified and approximate model of friction. Especially, in the Tresca model, there may exist points $x\in \Gamma_C$ such that $\sigmaa_{\normalInt\!\normalExt}(\uu)(x) = 0$ and $\uu_{\tanExt}(x)\neq 0$, in which case $\sigmaa_{\normalInt\!\tanExt}(\uu)(x)\neq 0$. In other words, friction can occur even if there is no contact.
\end{remark}
In order to avoid regularity issues, it is assumed that $\mathfrak{F}$ is uniformly Lipschitz continuous and $s \in L^2(\Gamma_C)$. Before stating the minimization problem in this case, let us introduce the non-linear functional $j_T : \Xx \rightarrow \mathbb{R} $ defined by:
\begin{equation*}
	j_T(\vv) := \int_{\Gamma_C} \mathfrak{F} s |\vv_{\tanExt}| \: .
\end{equation*}
With these notations, since considering the Tresca friction model means taking into account the frictional term $j_T$ in the energy functional, the associated minimization problem writes:
\begin{equation}
	\underset{\vv\in \Kk}{\inf} \:\: \varphi(\vv)+j_T(\vv) \:.
 	\label{OPT1}
\end{equation}
Since the additional term $j_T$ in the functional is convex, positive and  continuous 
one can deduce existence and uniqueness of the solution $\uu\in \Xx$, see for example \cite[Section 1.5]{oden1980theory}.
From this reference, one also gets that \eqref{OPT1} can be equivalently rewritten as a variational inequality (of the second kind):
\begin{equation}
	a(\uu,\vv-\uu) + j_T(\vv) - j_T(\uu) \:\geq\: L(\vv-\uu)\:, \:\:\: \forall \vv \in \Kk\: .
    \label{IV1}
\end{equation}
Again, from \cite{DuvLio1972} problems \eqref{OPT1} and \eqref{IV1} are equivalent to the strong formulation \eqref{FF0:all}, except for the last condition \eqref{FF0:6}, which is replaced by the two conditions \eqref{CL1}.

\begin{remark}
	Since $\uu\in \Xx$, the regularity of $\sigmaa(\uu)\cdot\normalInt$ is in general $\Hh^{-\frac{1}{2}}(\Gamma_C)$. However, even though no better regularity can be expected for the normal component $\sigmaa_{\normalInt\!\normalExt}$ in the general case, one has that the tangential component $\sigmaa_{\normalInt\!\tanExt}\in \Ll^2(\Gamma_C)$ when $s\in L^2(\Gamma_C)$. We refer to \cite[Chapter 4]{stadler2004infinite} for further details.
\end{remark}

\subsection{Penalty formulation}

The formulation that will be studied here originate from the classical penalty method, see \cite{Lio1969} and \cite{aubin2007approximation} for the general method,  \cite{kikuchi1981penalty} or \cite{kikuchi1988contact} for its application to unilateral contact problems, and \cite{chouly2013convergence} for its application to the Tresca friction problem. 
This formulation reads: find $\uu_\varepsilon$ in $\Xx$ such that, for all $\vv \in \Xx$,
\begin{equation}
  a(\uu_\varepsilon,\vv) + \frac{1}{\varepsilon} \prodL2{\maxx(\uu_{\varepsilon,\normalExt} -\gG_{\normalExt}), \vv_{\normalExt}}{\Gamma_C}  + \frac{1}{\varepsilon} \prodL2{\qq(\varepsilon\mathfrak{F}s,\uu_{\varepsilon,\tanExt}), \vv_{\tanExt}}{\Gamma_C} = L(\vv) \:,
  \label{FV}
\end{equation}
where $\maxx$ denotes the projection onto $\mathbb{R}_+$ in $\mathbb{R}$ (also called the positive part function) and, for any $\alpha\in\mathbb{R}_+$, $\qq(\alpha,\cdot)$ denotes the projection onto the ball $\mathcal{B}(0,\alpha)$ in $\mathbb{R}^{d-1}$. Those projections admit analytical expressions: for all $y\in\mathbb{R}$, $z\in\mathbb{R}^{d-1}$:
\begin{equation*}
    \maxx(y):=\max\{0,y\} \:,
\qquad
    \qq(\alpha,z) := \left\{
    \begin{array}{lr}
         z & \mbox{ if } |z|\leq \alpha ,\\
         \alpha\dfrac{z}{|z|} & \mbox{ else.}
    \end{array}
    \right.
\end{equation*}
It is well known (see for example \cite{kikuchi1981penalty,chouly2013convergence} or \cite[Section 6.5]{kikuchi1988contact}) that \eqref{FV} admits a unique solution $\uu_\varepsilon\in\Xx$. Moreover, from the same references, one gets that passing to the limit $\varepsilon \to 0$ leads to $\uu_\varepsilon \to \uu$ strongly in $\Xx$.

\begin{remark}
    Formulation \eqref{FV} is actually the optimality condition related to the unconstrained differentiable optimization problem derived from \eqref{OPT1}:
$$
   \underset{\vv \in \Xx}{\inf} \left\{ \varphi(\vv) + j_{T,\varepsilon}(\vv) + j_\varepsilon(\vv) \right\} \:,
$$
where $j_\varepsilon$ is a penalty term introduced to relax the constraint $\vv\in\Kk$, and $j_{T,\varepsilon}$ is a regularization of $j_T$.

Moreover, in this model, one gets from \eqref{FV} that the non-penetration conditions \eqref{FF0:5}-\eqref{FF0:6} and the friction condition \eqref{CL1} rewrite:
  \begin{subequations} \label{CL2:all}
    \begin{align}
       \sigmaa_{\normalInt\! \normalExt}(\uu_\varepsilon) &= -\frac{1}{\varepsilon} \maxx(\uu_{\varepsilon,\normalExt} -\gG_{\normalExt}) & \mbox{on } \Gamma_C, \label{CL2:1}\\
      \sigmaa_{\normalInt\! \tanExt}(\uu_\varepsilon) &= - \frac{1}{\varepsilon} \qq(\varepsilon\mathfrak{F}s,\uu_{\varepsilon,\tanExt}) & \mbox{on } \Gamma_C.  \label{CL2:2}
    \end{align}
  \end{subequations}
  From those expressions, one deduces the new definitions for the sets of points of particular interest:
  \begin{itemize}
  	\item points in contact: $\{ x \in \Gamma_C \: |\: \uu_{\varepsilon,\normalExt}\geq \gG_{\normalExt} \}$,
  	\item sticking points: $\{ x \in \Gamma_C \: |\:\: |\uu_{\varepsilon,\tanExt}|\leq \varepsilon\mathfrak{F}s \}$,
  	\item sliding points: $\{ x \in \Gamma_C \: |\:\: |\uu_{\varepsilon,\tanExt}|\geq \varepsilon\mathfrak{F}s \}$.
  \end{itemize}
\end{remark}

\section{Shape optimization}
\label{sec:shapeopt}

Given a cost functional $J(\Omega)$ depending explicitly on the domain $\Omega$, and also implicitly, through $y(\Omega)$ the solution of some variational problem on $\Omega$, the optimization of $J$ with respect to $\Omega$ or \textit{shape optimization problem} reads:
\begin{equation}
    \inf_{\Omega \in \pazocal{U}_{ad}} J(\Omega) \:,
    \label{ShapeOPT}
\end{equation}
where $\pazocal{U}_{ad}$ stands for the set of admissible domains. 

Here, since the physical problem considered is modeled by \eqref{FV}, one has $y(\Omega)=\uu_\varepsilon(\Omega)$ solution of \eqref{FV} defined on $\Omega$. Therefore, let us replace the notation of the functional $J$ by $J_\varepsilon$ to emphasize the dependence with respect to the penalty parameter.  
%
%
Let $D\subset \mathbb{R}^d$ be a fixed bounded smooth domain, and let $\hat{\Gamma}_D\subset\partial D$ be a part of its boundary which will be the "potential" Dirichlet boundary. This means that for any domain $\Omega\subset D$, the Dirichlet boundary associated to $\Omega$ will be defined as $\Gamma_D:=\partial\Omega \cap \hat{\Gamma}_D$. With these notations, we introduce the set $\pazocal{U}_{ad}$ of all admissible domains, which consists of all smooth open domains $\Omega$ such that the Dirichlet boundary $\Gamma_D\subset \partial D$ is of stritly positive measure, that is:
$$
   \pazocal{U}_{ad} := \{ \Omega \subset D \: : \: \Omega \mbox{ is of class $\pazocal{C}^1$ and } |\partial\Omega \cap \hat{\Gamma}_D| > 0 \}.
$$
\subsection{Derivatives}
\label{sec:deriv}
The shape optimization method followed in this work is the so-called \textit{perturbation of the identity}, as presented in~\cite{murat1975etude} and \cite{henrot2006variation}. Let us introduce $\Cc^1_b(\mathbb{R}^d) := {(\pazocal{C}^1(\mathbb{R}^d)\cap W^{1,\infty}(\mathbb{R}^d) )}^d$, equipped with the $d$-dimensional $W^{1,\infty}$ norm, denoted $\norml\cdot \normr_{1,\infty}$. In order to move the domain $\Omega$, let $\thetaa \in \Cc^1_b(\mathbb{R}^d)$ be a (small) geometric deformation vector field. The associated perturbed or transported domain in the direction $\thetaa$ will be defined as: $\Omega(t) := (\Id+t\thetaa)(\Omega)$ for any $t>0$. To make things clear
some basic notions of shape sensitivity analysis from \cite{sokolowski1992introduction} are briefly recalled. 

We denote again $y(\Omega)$ the solution, in some Sobolev space denoted $W(\Omega)$, of a variational formulation posed on $\Omega$. For any fixed $\thetaa$, for any small $t>0$, let $y(\Omega(t))$ be the solution of the same variational formulation posed on $\Omega(t)$. 
If the variational formulation is regular enough (e.g.$\!$ if it is linear), it can be proved (see~\cite[Chapter 3]{sokolowski1992introduction}) that $y(\Omega(t))\circl(\Id+t\thetaa)$ also belongs to $W(\Omega)$.
\begin{itemize}[leftmargin=*]
    \item The \textit{Lagrangian derivative} or \textit{material derivative} of $y(\Omega)$ in the direction $\thetaa$ is the element $\dot{y}(\Omega)[\thetaa] \in W(\Omega)$ defined by:
    $$
        \dot{y}(\Omega)[\thetaa] := \lim_{t\searrow 0} \:\frac{1}{t}\left( y(\Omega(t))\circl(\Id+t\thetaa) - y(\Omega)\right) \: .
        \label{DefMatDer}
    $$
    If the limit is computed weakly in $W(\Omega)$ (respectively strongly), we talk about \textit{weak} material derivative (respectively \textit{strong} material derivative).
    \item If the additional condition $\gradd y(\Omega)\thetaa \in W(\Omega)$ holds for all $\thetaa \in \Cc^1_b(\mathbb{R}^d)$, then one may define a directional derivative called the \textit{Eulerian derivative} or \textit{shape derivative} of $y(\Omega)$ in the direction $\thetaa$ as the element $dy(\Omega)[\thetaa]$ of $W(\Omega)$ such that:
    $$
        dy(\Omega)[\thetaa] := \dot{y}(\Omega)[\thetaa] - \gradd y(\Omega)\thetaa \: .
        \label{DefShaDer}
    $$
    \item The solution $y(\Omega)$ is said to be \textit{shape differentiable} if it admits a directional derivative for any admissible direction $\thetaa$, and if the map $\thetaa \mapsto dy(\Omega)[\thetaa]$ is linear continuous from $\Cc^1_b(\mathbb{R}^d)$ to $W(\Omega)$.
\end{itemize}
%
\begin{remark}
    Linearity and continuity of $\thetaa \mapsto \dot{y}(\Omega)[\thetaa]$ is actually equivalent to the Gâteaux differentiability of the map $\thetaa \mapsto y(\Omega(\thetaa))\circl(\Id+\thetaa)$. The reader is referred to \cite[Chapter 8]{DelZol2001} for a complete review on the different notions of derivatives.
\end{remark}
When there is no ambiguity, the material and shape derivatives of some function $y$ at $\Omega$ in the direction $\thetaa$ will be simply denoted $\dot{y}$ and $dy$, respectively. 

\subsection{Shape sensitivity analysis of the penalty formulation}

The goal of this section is to prove the differentiability of $\uu_\varepsilon$ with respect to the shape. 
As functions $\maxx$ and $\qq$ fail to be Fréchet differentiable it is not possible to rely on the implicit function theorem as in \cite[Chapter 5]{henrot2006variation}.
%
Nevertheless, these functions admit directional derivatives. %
Hence, working with the directional derivatives of $\maxx$ and $\qq$ and following the approach in \cite{sokolowski1992introduction}, we show existence of directional material/shape derivatives for $\uu_\varepsilon$. 
Then, under assumptions on some specific subsets of $\Gamma_C$ (this will be presented and referred to as \cref{A1}), shape differentiability of $\uu_\varepsilon$ is proved.

Since the domain is transported, the functions $\Aa$, $\ff$, $\tauu$, $\mathfrak{F}$, and $s$ have to be defined everywhere in $\mathbb{R}^d$. 
They also need to enjoy more regularity for usual differentiability results to hold. In particular 
we make the following regularity assumptions :
\begin{hypothesis}\label{A0}
    $\Aa\in \pazocal{C}^1_b(\mathbb{R}^d,\mathbb{T}^4),\:\, \ff \in \Hh^1(\mathbb{R}^d),\:\, \tauu \in \Hh^2(\mathbb{R}^d)$, $s\in L^2(\Gamma_C)$ and $\mathfrak{F}s \in H^2(\mathbb{R}^d)$.
\end{hypothesis}
%
\begin{notation} Through change of variables, we can transform expression on $\Omega(t)$ to expression on 
$\Omega$. 
Composition with the operator $\circl (\Id+t\thetaa)$ will be denoted by $(t)$, for instance, $\Aa(t):=\Aa\circl(\Id+t\thetaa)$. 
The normal and tangential component associated to $\normalExt(t)$ of a vector $v$ is denoted $v_{\normalExt(t)}$ and $v_{\tanExt(t)}$ respectively. 
For integral expressions the Jacobian and tangential Jacobian of the transformation gives $\JacV(t) := $ Jac$(\Id+t\thetaa)$ and $\JacB(t):=$ Jac$_{\Gamma(t)}(\Id+t\thetaa)$. 
To simplify the notations let $\uu_{\varepsilon,t} := \uu_\varepsilon(\Omega(t))$ and 
$\uu_{\varepsilon}^t := \uu_{\varepsilon,t}\circl(\Id+t\thetaa)$. Finally, we also introduce the map $\Phi_\varepsilon:\mathbb{R}_+ \to \Xx$ such that for each $t>0$, $\Phi_\varepsilon(t)=\uu_{\varepsilon}^t$.
\end{notation}
%

As differentiability of $\uu_\varepsilon$ with respect to the shape is directly linked to differentiability of $\Phi_\varepsilon$, we will focus on the latter. However, note that the direction $\thetaa$ is fixed in the definition of $\Phi_\varepsilon$, therefore every property of $\Phi_\varepsilon$ (continuity, differentiability) will be associated to a directional property for $\uu_\varepsilon$. 

\subsubsection{Continuity of $\Phi_\varepsilon$}

Before getting interested in differentiability, the first step is to prove continuity.

\begin{theo}
  If \cref{A0} holds, then for any $\thetaa \in \Cc^1_b(\mathbb{R}^d)$, $\Phi_\varepsilon$ is strongly continuous at $t=0^+$.
\end{theo}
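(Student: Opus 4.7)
The plan is to pull the penalty formulation back to the fixed domain $\Omega$, extract a weakly convergent subsequence, identify the limit as $\uu_\varepsilon$, and then upgrade to strong convergence. First, I would apply the change of variables $\Id+t\thetaa$ to rewrite \eqref{FV}, posed on $\Omega(t)$, as a variational equation on $\Omega$ satisfied by $\uu_\varepsilon^t=\Phi_\varepsilon(t)$. The transported equation has the same structure as \eqref{FV}, but the data $\Aa$, $\ff$, $\tauu$, $\gG_{\normalExt}$, $\mathfrak{F}s$, $\normalExt$ are replaced by their compositions with $\Id+t\thetaa$, and the integrals over $\Omega$ and $\Gamma_C$ carry the Jacobians $\JacV(t)$ and $\JacB(t)$. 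By \cref{A0}, the $\pazocal{C}^1_b$ regularity of $\thetaa$, and the $\pazocal{C}^3$ regularity of $\gG_{\normalExt}$ near $\Gamma_C$, all transported quantities converge strongly (in $L^\infty$ or $L^2$, as appropriate) to their $t=0$ values, and the transported bilinear form remains $\Xx$-elliptic with a constant bounded away from $0$ for $t$ small enough.

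Second, testing the transported equation against $\vv=\uu_\varepsilon^t$ yields a uniform bound on $\|\uu_\varepsilon^t\|_\Xx$. The key observation is that the two projection terms are ``monotone-like'': $\maxx(y)\,y\geq 0$ and $\qq(\alpha,z)\cdot z\geq 0$. Writing $\uu_{\varepsilon,\normalExt(t)}^t=(\uu_{\varepsilon,\normalExt(t)}^t-\gG_{\normalExt}(t))+\gG_{\normalExt}(t)$ isolates a non-negative contribution from the penalty term, plus a remainder controlled by $\|\maxx(\cdot)\|_{0,\Gamma_C}\|\gG_{\normalExt}(t)\|_{0,\Gamma_C}$; the friction term is handled similarly using $|\qq(\varepsilon\mathfrak{F}s,\cdot)|\leq \varepsilon\mathfrak{F}s$. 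Combined with ellipticity and continuity of the transported $L$, this gives $\|\uu_\varepsilon^t\|_\Xx\leq C$ uniformly for $t$ small.

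Third, I would extract a subsequence $\uu_\varepsilon^{t_n}\rightharpoonup \uu^*$ weakly in $\Xx$, and use the compactness of the trace operator from $\Hh^1(\Omega)$ to $\Ll^2(\Gamma_C)$ to obtain strong trace convergence on $\Gamma_C$. Passage to the limit in the transported equation is then direct: strong convergence of $\Aa(t)$ in $L^\infty$ and of the Jacobians handles the elasticity term, while the Lipschitz continuity of $\maxx$ and $\qq$, together with strong trace convergence and the $L^2$ convergence of $\gG_{\normalExt}(t)$, $\normalExt(t)$, and $\mathfrak{F}s(t)$, handles the nonlinear boundary terms. The limit $\uu^*$ therefore solves \eqref{FV} on $\Omega$, so $\uu^*=\uu_\varepsilon$ by uniqueness, and a standard subsequence argument gives convergence of the whole family. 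To upgrade from weak to strong convergence, I would subtract the original equation from the transported one, both tested against $\uu_\varepsilon^t-\uu_\varepsilon$: ellipticity bounds $\alpha_0\|\uu_\varepsilon^t-\uu_\varepsilon\|_\Xx^2$ from below, monotonicity of $\maxx$ and $\qq$ makes the projection contributions non-negative (up to perturbations of order $t$ from the change of variables), and the remaining differences of transported coefficients tend to $0$.

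The main obstacle is the nondifferentiability of $\maxx$ and $\qq$, which rules out any implicit function theorem argument; it is bypassed by relying only on their Lipschitz continuity and one-sided monotonicity, together with the strong compactness of the trace on $\Gamma_C$. A secondary technical point is that the normal/tangential decomposition in the transported equation is performed with $\normalExt(t)$ rather than $\normalExt$, so one must verify that the perturbation of the normal preserves the useful monotonicity structure uniformly in $t$; this follows from the $\pazocal{C}^2$ regularity of $\normalExt$ in a neighbourhood of $\Gamma_C$ established in Section~2.4.
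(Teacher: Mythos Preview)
Your proposal is correct and follows essentially the same route as the paper: transport to the reference domain, uniform bound, weak subsequential limit identified by compactness of the trace plus Lipschitz continuity of $\maxx$ and $\qq$, and then upgrade to strong convergence by testing the difference equation against $\uu_\varepsilon^t-\uu_\varepsilon$. The only tactical variation is in how you control the penalty terms: you invoke the one-sided monotonicity $\maxx(y)y\ge 0$, $\qq(\alpha,z)\cdot z\ge 0$ and $(\maxx(a)-\maxx(b))(a-b)\ge 0$ directly, whereas the paper relies more on the first-order Taylor expansions of the transported coefficients and, for strong convergence, on the boundedness of $R_{\normalExt}^t(\uu_\varepsilon^t)-R_{\normalExt}(\uu_\varepsilon)$ in $L^2(\Gamma_C)$ combined with the strong $\Ll^2(\Gamma_C)$ convergence of $\boldsymbol{\delta}_{\uu,\varepsilon}^t$ coming from the compact embedding; both handle the same terms and lead to the same $o(1)$ bounds.
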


\begin{proof} When $t$ is small enough, the transported potential contact zone verifies $\Gamma_C(t)\subset\partial\Omega_{rig}^h$, so that the regularities of $\gG_{\normalExt}$ and $\normalExt$ are preserved. 
When transported to $\Omega(t)$, problem \eqref{FV} becomes: find $\uu_{\varepsilon,t} \in \Hh^1_{\Gamma_D(t)}(\Omega(t))=:\Xx(t)$ such that, 
\begin{equation} 
    \begin{aligned}
        \int_{\Omega(t)}& \Aa : \epsilonn(\uu_{\varepsilon,t}) : \epsilonn(\vv_t) + \frac{1}{\varepsilon} \int_{\Gamma_C(t)} \maxx(\uu_{\varepsilon,t} \cdot \normalExt -\gG_{\normalExt}) (\vv_t )_{\normalExt} \\
        \: & + \frac{1}{\varepsilon} \int_{\Gamma_C(t)}  \qq\left(\varepsilon\mathfrak{F}s,(\uu_{\varepsilon,t})_{\tanExt}\right)(\vv_t)_{\tanExt} = \int_{\Omega(t)} \ff \: \vv_t + \int_{\Gamma_N(t)} \tauu \: \vv_t \quad \forall \vv_t \in \Xx(t).
  \end{aligned}
  \label{FVT0}
\end{equation}
We can transform \eqref{FVT0} as an expression on the reference domain $\Omega$. 
For the test function we use $\vv^t:=\vv_t\circl(\Id+t\thetaa)$. Moreover, to simplify the expressions, we introduce
\begin{align*}
    R_{\normalExt}(\vv) &:=\maxx(\vv_{\normalExt}-\gG_{\normalExt})\:, \quad R_{\normalExt}^{t}(\vv) :=\maxx(\vv_{\normalExt(t)}-\gG_{\normalExt}(t))\:,\\
    S_{\tanExt}(\vv) &:= \qq(\varepsilon\mathfrak{F}s,\vv_{\tanExt})\:, \quad S_{\tanExt}^t(\vv) := \qq(\varepsilon(\mathfrak{F}s)(t),\vv_{\tanExt(t)})\:,
\end{align*}
and finally, the transported strain tensor $\epsilonn^t$ is also introduced: for all $\vv\in\Xx$,
\begin{equation*}
    \epsilonn^t(\vv):= \frac{1}{2} \left( \gradd \vv{(\Ii+t\gradd\thetaa)}^{-1} + {(\Ii+t\gradd\thetaa^T)}^{-1}{\gradd \vv}^T  \right) \: .
\end{equation*}
With the notations introduced and the change of variables mentionned above we have
\begin{equation}
    \begin{aligned}
        \int_{\Omega} \Aa(t) : &\epsilonn^t(\uu_{\varepsilon}^{t}) : \epsilonn^t(\vv^t) \:\JacV(t) + \frac{1}{\varepsilon} \int_{\Gamma_{C}} R_{\normalExt}^{t}(\uu_{\varepsilon}^{t}) \vv^t_{\normalExt(t)}  \:\JacB(t) 
        \\ & 
        + \frac{1}{\varepsilon} \int_{\Gamma_{C}} S_{\tanExt}^t(\uu_{\varepsilon}^{t}) \vv^t_{\tanExt(t)} \JacB(t) 
        =  \int_{\Omega} \ff(t)\vv^t \JacV(t) + \int_{\Gamma_{N}} \tauu(t)\vv^t \JacB(t) \:.
    \end{aligned}
    \label{FVT}
\end{equation}
Note that for $t$ sufficiently small, $\norml t \thetaa \normr_{1,\infty} \! < 1$. Thus the application $(\Id+t\thetaa)$ is a $\pazocal{C}^1$-diffeomorphism, and so the map $\vv_t \mapsto \vv^t$ is an isomorphism from $\Xx(t)$ to $\Xx$. Thus, one deduces that $\uu_{\varepsilon}^{t}$ is the solution of the variational formulation obtained when replacing $\vv^t$ by $\vv$ in \eqref{FVT}, which holds for all $\vv$ in $\Xx$.

\paragraph{Uniform boundedness of $\uu_\varepsilon^t$ in $\Xx$.} 
Let us show that $\uu_\varepsilon^t$ is uniformly bounded in $t$. To achieve this we use the first order Taylor expansions with respect to $t$ of all known terms in \eqref{FVT}. Such expansions are valid due to \cref{A0} and the regularity assumptions on $\Omega$, see \cite{henrot2006variation,sokolowski1992introduction}. We recall some of them: $\forall \vv \in \Xx$,
\begin{equation*}
    \begin{aligned}
        \left\lVert \: \epsilonn^t(\vv) - \epsilonn(\vv) + \frac{t}{2}\left( \gradd \vv \gradd \thetaa + {\gradd \thetaa}^T {\gradd \vv}^T \right)\right\lVert_{0,\Omega} &= O(t^2)\norml \vv \normr_{\Xx} \:,\\
        \norml \Aa(t) - \Aa - t\gradd \Aa :\thetaa \normr_{\infty,\Omega} &= O(t^2)\:, \\
        \norml \JacV(t)-1-t\divv\thetaa \normr_{\infty,\Omega} &= O(t^2)\:, \\
        \norml \JacB(t)-1-t\divv_{\Gamma}\thetaa \normr_{\infty,\partial\Omega} &= O(t^2)\:, \\
        \norml \vv_{\normalExt(t)}- \vv_{\normalExt} - t(\vv\cdot(\gradd \normalExt \thetaa))\normr_{0,\Gamma_C} &= O(t^2)\norml \vv \normr_{0,\Gamma_C}\:,\\
        \norml \vv_{\tanExt(t)}- \vv_{\tanExt} + t(\vv\cdot(\gradd \normalExt \thetaa))\normalExt + t(\vv\cdot\normalExt)(\gradd \normalExt \thetaa) \normr_{0,\Gamma_C} &= O(t^2)\norml \vv \normr_{0,\Gamma_C}\:.
    \end{aligned}
\end{equation*}

Making use of these expansions, the ellipticity of $a$ and taking $\uu_\varepsilon^t$ as test-function in \eqref{FVT}, one gets the following estimate:
\begin{equation*}
    (\alpha_0 + O(t))\norml \uu_\varepsilon^t \normr_{\Xx}^2 \:\leq \: O(t)\norml \uu_\varepsilon^t \normr_{\Xx} + \:O(t^2) \:.
\end{equation*}
Thus, for $t$ small enough, one gets that there exist some positive constants $C_1$ and $C_2$ such that the sequence $C_1\norml \uu_\varepsilon^t \normr_{\Xx}^2-C_2\norml \uu_\varepsilon^t \normr_{\Xx}$ is uniformly bounded in $t$, which proves uniform boundedness of $\{\uu_\varepsilon^{t_k}\}_k$ in $\Xx$, for any sequence $\{t_k\}_k$ decreasing to 0.

\paragraph{Continuity.}
First, one needs to show that the limit (in some sense that will be specified) of $\uu_\varepsilon^t$ as $t\to 0$ is indeed $\uu_\varepsilon$. Let $\{t_k\}_k$ be a sequence decreasing to 0. Since the sequence $\{\uu_\varepsilon^{t_k}\}_k$ is bounded 
and $\Xx$ a reflexive Banach space, there exists a weakly convergent subsequence (still denoted $\{\uu_\varepsilon^{t_k}\}_k$), say $\uu_\varepsilon^{t_k} \rightharpoonup \hat{\uu}_\varepsilon \in \Xx$. 

Due to the Taylor expansions above, the weak convergence of $\{\uu_\varepsilon^{t_k}\}_k$, the compact embedding $\Hh^{\frac{1}{2}}(\Gamma_C) \hookrightarrow \Ll^2(\Gamma_C)$ and Lipschitz continuity of $\maxx$ and $\qq$, taking $t=t_k$ in \eqref{FVT} and passing to the limit $k\to +\infty$ leads to: for all $\vv \in \Xx$,
\begin{equation*}
        \int_{\Omega} \Aa : \epsilonn(\hat{\uu}_{\varepsilon}) : \epsilonn(\vv) + \frac{1}{\varepsilon} \int_{\Gamma_{C}} R_{\normalExt}(\hat{\uu}_{\varepsilon}) \vv_{\normalExt}
        + \frac{1}{\varepsilon} \int_{\Gamma_{C}} S_{\tanExt}(\hat{\uu}_{\varepsilon}) \vv_{\tanExt} =  \int_{\Omega} \ff \: \vv + \int_{\Gamma_{N}} \tauu\: \vv \:.
\end{equation*}
This precisely means that $\hat{\uu}_\varepsilon = \uu_\varepsilon$, since they are both solution of problem \eqref{FV}, which admits a unique solution. The uniqueness also proves that the whole sequence $\{\uu_\varepsilon^{t_k}\}_k$ tends to $\uu_\varepsilon$.

Now, strong continuity of the map $t \mapsto \uu_\varepsilon^t$ at $t=0^+$ in $\Xx$ may be proved using the difference $\boldsymbol{\delta}_{\uu,\varepsilon}^t:=\uu_\varepsilon^t-\uu_\varepsilon$, which appears when subtracting the formulations verified by $\uu_\varepsilon^t$ and $\uu_\varepsilon$, respectively. Note that $\boldsymbol{\delta}_{\uu,\varepsilon}^t$ is bounded in $\Xx$ and that it converges weakly to 0 in $\Xx$. 

For $t$ sufficiently small, let us consider
\begin{equation}
  \begin{aligned}
    \int_{\Omega} &\Aa(t) : \epsilonn^t(\uu_{\varepsilon}^{t}) : \epsilonn^t(\vv) \:\JacV(t) - \int_{\Omega} \Aa : \epsilonn(\uu_\varepsilon) : \epsilonn(\vv) \\
    & + \frac{1}{\varepsilon} \int_{\Gamma_{C}} R_{\normalExt}^{t}(\uu_{\varepsilon}^{t}) \vv_{\normalExt(t)} \: \JacB(t) - \frac{1}{\varepsilon} \int_{\Gamma_{C}} R_{\normalExt}(\uu_{\varepsilon})\vv_{\normalExt} \\
    & + \frac{1}{\varepsilon} \int_{\Gamma_{C}} S_{\tanExt}^t(\uu_{\varepsilon}^{t}) \vv_{\tanExt(t)} \JacB(t) - \frac{1}{\varepsilon} \int_{\Gamma_{C}} S_{\tanExt}(\uu_{\varepsilon}) \vv_{\tanExt} \\
    & = \int_{\Omega} \ff(t) \vv \: \JacV(t) - \int_{\Omega} \ff \: \vv + \int_{\Gamma_{N}} \tauu(t)\vv \: \JacB(t) - \int_{\Gamma_{N}} \tauu \: \vv \:.
  \end{aligned}
  \label{FVTMoinsFV}
\end{equation}
Let us introduce three groups of terms, for any $\vv \in \Xx$, say $T_1(\vv)$, $T_2(\vv)$, $T_3(\vv)$ and $T_4(\vv)$, each $T_i(\vv)$ corresponding to the $i$-th line in equation \eqref{FVTMoinsFV}. The terms $T_1$ and $T_4$ have already been treated in the literature as they appear in the classical elasticity problem. Especially, one gets from \cite[Section 3.5]{sokolowski1992introduction} that
\begin{equation*}
    \begin{aligned}
    T_1(\boldsymbol{\delta}_{\uu,\varepsilon}^t) & \: \geq \: \alpha_0 \norml \boldsymbol{\delta}_{\uu,\varepsilon}^t \normr_{\Xx}^2 - tC\norml \boldsymbol{\delta}_{\uu,\varepsilon}^t \normr_{\Xx} \:, \\
    | T_4(\boldsymbol{\delta}_{\uu,\varepsilon}^t) | & \: \leq \: tC \norml \boldsymbol{\delta}_{\uu,\varepsilon}^t \normr_{\Xx} = o(t) \:.
    \end{aligned}
\end{equation*}
As for $T_2$ and $T_3$, 
some boundedness results are needed which can be deduced from the properties of functions $\maxx$ and $\qq$, the trace theorem and continuity of $t\mapsto \normalExt(t)$, $t\mapsto \gG_{\normalExt}(t)$, $t\mapsto (\mathfrak{F}s)(t)$, $t\mapsto \vv_{\tanExt(t)}$. For any $\vv\in\Xx$, one has
\begin{equation*}
    \begin{aligned}
        \norml \vv_{\normalExt(t)}\normr_{0,\Gamma_C} \leq C \norml \vv \normr_{\Xx}\:, \hspace{2em} &\norml \vv_{\tanExt(t)}\normr_{0,\Gamma_C} \leq C \norml \vv \normr_{\Xx}\:, \\
        \norml R_{\normalExt}^{t}(\vv) \normr_{0,\Gamma_C} \leq C\left(1+\norml \vv \normr_{\Xx}\right)\:, \hspace{2em} &  \norml S_{\tanExt}^{t}(\vv) \normr_{0,\Gamma_C} \leq C \:,
    \end{aligned}
\end{equation*}
and the same inequalities applies to $\vv_{\normalExt}$, $\vv_{\tanExt}$, $R_{\normalExt}$ and $S_{\tanExt}$. 
Then, for  $T_2$
\begin{equation*}
    \begin{aligned}
        T_2(\vv) = & \: \frac{1}{\varepsilon} \int_{\Gamma_{C}} R_{\normalExt}^{t}(\uu_{\varepsilon}^{t})\vv_{\normalExt(t)} \: (\JacB(t)-1) + \frac{1}{\varepsilon} \int_{\Gamma_{C}} R_{\normalExt}^{t}(\uu_{\varepsilon}^{t}) (\vv_{\normalExt(t)}-\vv_{\normalExt}) \\
        & + \frac{1}{\varepsilon} \int_{\Gamma_{C}} \left( R_{\normalExt}^{t}(\uu_{\varepsilon}^{t}) - R_{\normalExt}(\uu_{\varepsilon})\right)\vv_{\normalExt} \:.
    \end{aligned}
\end{equation*}
since $ \left( R_{\normalExt}^{t}(\uu_{\varepsilon}^{t}) - R_{\normalExt}(\uu_{\varepsilon})\right)$ is bounded in $L^2(\Gamma_C)$ and $\boldsymbol{\delta}_{\uu,\varepsilon}^t\to 0$ strongly in $\Ll^2(\Gamma_C)$, 
$$
    \left| T_2(\boldsymbol{\delta}_{\uu,\varepsilon}^t) \right| \leq t\,C \norml \boldsymbol{\delta}_{\uu,\varepsilon}^t \normr_{\Xx} +  \frac{1}{\varepsilon} \int_{\Gamma_{C}} \left|R_{\normalExt}^{t}(\uu_{\varepsilon}^{t}) - R_{\normalExt}(\uu_{\varepsilon})\right| \left|\boldsymbol{\delta}_{\uu,\varepsilon}^t \right|  = o(1) \:.
$$
As for $T_3$, using the boundedness of $\left(S_{\tanExt}^t(\uu_{\varepsilon}^{t}) - S_{\tanExt}(\uu_{\varepsilon}) \right)$ in $\Ll^2(\Gamma_C)$ and the same decomposition as for $T_2$ we get
\begin{equation*}
    | T_3(\boldsymbol{\delta}_{\uu,\varepsilon}^t) | \leq t\,C 
    \norml \boldsymbol{\delta}_{\uu,\varepsilon}^t \normr_{\Xx} 
    +  \frac{1}{\varepsilon} \int_{\Gamma_{C}} \left| S_{\tanExt}^t(\uu_{\varepsilon}^{t}) - S_{\tanExt}(\uu_{\varepsilon}) \right| \left|\boldsymbol{\delta}_{\uu,\varepsilon}^t \right|  = o(1) \:,
\end{equation*}
Thus, choosing $\boldsymbol{\delta}_{\uu,\varepsilon}^t$ as a test-function in \eqref{FVTMoinsFV} yields: $\alpha_0 \norml \boldsymbol{\delta}_{\uu,\varepsilon}^t \normr_{\Xx}^2 \leq o(1)$, which proves strong continuity of $t \mapsto \uu_\varepsilon^t$ in $\Xx$ at $t=0^+$.
\end{proof}

This result means that $\uu_\varepsilon$ is strongly directionally continuous with respect to the shape. Now, it remains to prove that differentiability also holds.


\subsubsection{Directional differentiability of $\maxx$ and $\qq$}
\label{app:DirDiffQ}


In order to study differentiability of $\Phi_\varepsilon$, we need some preliminary results concerning the  directional differentiability of the non-Fréchet differentiable functions $\maxx$ and $\qq$.
Let us briefly recall the definition of a Nemytskij operator. 
\begin{defn}
  Let $S$ be a measurable subset of $\mathbb{R}^d$, let $X$ and $Y$ be two real Banach spaces of functions defined on $S$. Given a mapping $\psi : S\times X \to Y$, the associated Nemytskij operator $\Psi$ is defined by:
  \begin{displaymath}
    \Psi(v)(x) := \psi(x,v(x)) \:, \:\: \mbox{ for all } x \in S \: . 
  \end{displaymath}
\end{defn}
As explained in details in \cite{goldberg1992nemytskij} or \cite[Section 4.3]{troltzsch2010optimal}, the smoothness of $\psi$ does not guarantee the smoothness of $\Psi$. In our case, we are only interested in directional differentiability of the Nemytskij operators associated to $\maxx$ and $\qq$. Thus, directional differentiability and Lipschitz continuity of $\maxx$ and $\qq$ in $\mathbb{R}$ and $\mathbb{R}^d$, combined with Lebesgue's dominated convergence, will enable us to conclude directly, without using the more general results from \cite{goldberg1992nemytskij}.


\begin{lemma}
 The function $\maxx:\mathbb{R}\to\mathbb{R}$ is Lipschitz continuous and directionally differentiable, with directional derivative at $u$ in the direction $v\in\mathbb{R}$:
 $$
  \dmaxx(u;v) = \left\{
      \begin{array}{lr}
        0 & \mbox{ if } \:u<0 ,\\
        \maxx(v) & \mbox{ if } \:u=0, \\
        v & \mbox{ if } \:u>0.
      \end{array}
    \right.
$$
\label{LemDirDiffMax}
\end{lemma}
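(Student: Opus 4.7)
The plan is a direct case analysis, exploiting the piecewise-linear structure of $\maxx$. Lipschitz continuity follows immediately from the standard estimate $|\maxx(a)-\maxx(b)|\leq |a-b|$ for all $a,b\in\mathbb{R}$, which is obtained by checking the four sign configurations of $a$ and $b$ (or simply noting that $\maxx$ is the projection onto the closed convex set $\mathbb{R}_+$, hence 1-Lipschitz).

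For directional differentiability at $u\in\mathbb{R}$ in the direction $v\in\mathbb{R}$, I would compute
\[
    \dmaxx(u;v) = \lim_{t\searrow 0} \frac{\maxx(u+tv)-\maxx(u)}{t}
\]
by splitting on the sign of $u$. If $u>0$, then for $t>0$ small enough we have $u+tv>0$, so the quotient reduces to $((u+tv)-u)/t = v$. If $u<0$, then for $t>0$ small enough $u+tv<0$, so both $\maxx(u+tv)$ and $\maxx(u)$ vanish and the limit is $0$.

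The only slightly delicate case is $u=0$. Here $\maxx(u)=0$ and, using the positive homogeneity $\maxx(tv)=t\,\maxx(v)$ for $t>0$, the difference quotient simplifies exactly to $\maxx(v)$ for every $t>0$, giving $\dmaxx(0;v)=\maxx(v)$. This is also the step that shows $\maxx$ is \emph{not} Fréchet (nor even Gâteaux-linear) differentiable at the origin, since $v\mapsto \maxx(v)$ is not linear. Putting the three cases together yields the stated formula, and there is no genuine obstacle: the argument is elementary and relies only on the explicit definition of $\maxx$.
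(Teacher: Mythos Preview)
Your proof is correct and complete. The paper does not actually give its own proof of this lemma: it simply refers the reader to an external reference for the result, so there is nothing to compare against beyond noting that your elementary case analysis is the standard argument one would expect.
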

\begin{lemma}
    The Nemytskij operator $\maxx:L^2(\Gamma_C)\to L^2(\Gamma_C)$ is Lipschitz continuous and directionally differentiable.
\label{LemDirDiffNemytskijMax}
\end{lemma}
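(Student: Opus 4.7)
The plan is to lift the pointwise properties of $\maxx:\mathbb{R}\to\mathbb{R}$ from \cref{LemDirDiffMax} to its Nemytskij operator on $L^2(\Gamma_C)$ via a standard Lebesgue-dominated-convergence argument. First, for Lipschitz continuity, I would observe that the scalar function $\maxx$ satisfies $|\maxx(a)-\maxx(b)|\le |a-b|$ for all $a,b\in\mathbb{R}$. Applying this pointwise to $u,v\in L^2(\Gamma_C)$ and integrating yields $\norml \maxx(u)-\maxx(v)\normr_{0,\Gamma_C}\le \norml u-v\normr_{0,\Gamma_C}$, so the Nemytskij operator is $1$-Lipschitz (measurability of $\maxx(u)$ follows from continuity of the scalar map).

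Next, for directional differentiability at $u\in L^2(\Gamma_C)$ in the direction $h\in L^2(\Gamma_C)$, I define the candidate derivative $D(x):=\dmaxx(u(x);h(x))$, which is measurable (being a pointwise limit of measurable functions) and satisfies $|D(x)|\le |h(x)|$ thanks to the explicit formula from \cref{LemDirDiffMax}; in particular $D\in L^2(\Gamma_C)$. For $t>0$ set
\begin{equation*}
    g_t(x):=\frac{\maxx(u(x)+t\,h(x))-\maxx(u(x))}{t}\:.
\end{equation*}
By scalar Lipschitz continuity, $|g_t(x)|\le |h(x)|$ for a.e. $x$ and all $t>0$, so $|g_t(x)-D(x)|^2\le 4|h(x)|^2\in L^1(\Gamma_C)$, providing an integrable dominating function. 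By \cref{LemDirDiffMax}, $g_t(x)\to D(x)$ pointwise a.e. on $\Gamma_C$ as $t\searrow 0$.

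Finally, Lebesgue's dominated convergence theorem applied to $|g_t-D|^2$ gives $\norml g_t-D\normr_{0,\Gamma_C}\to 0$, which is precisely the directional differentiability of $\maxx:L^2(\Gamma_C)\to L^2(\Gamma_C)$ at $u$ in the direction $h$, with derivative $D$. The only mildly delicate point, and the one I would state carefully, is the identification of the pointwise limit at those $x$ where $u(x)=0$ (since the scalar derivative switches from $0$ to $v$ depending on the sign of $u$); on that set the formula $\dmaxx(0;h(x))=\maxx(h(x))$ from \cref{LemDirDiffMax} matches the pointwise limit of $g_t(x)=\maxx(h(x))$ exactly, so no exceptional set beyond a null set arises. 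No other obstacle is expected.
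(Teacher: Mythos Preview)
Your argument is correct and is essentially the approach implicit in the paper: the paper does not give a proof for this lemma (it refers to \cite{susu2018optimal}), but for the analogous \cref{LemDirDiffNemytskijQ} it uses exactly the scheme you describe---pointwise Lipschitz continuity, the pointwise directional derivative bound, and Lebesgue's dominated convergence---so your proof matches the paper's methodology.
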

The reader is referred to \cite{susu2018optimal}, for example, for the proof of those results.

\begin{notation} 
Let us introduce three subsets of $\mathbb{R}_{+}^{*}\times\mathbb{R}^{d-1}$: 
$$\pazocal{J}^-:=\{ (\alpha,z) \: : \: |z|<\alpha\},
\quad\pazocal{J}^0:=\{ (\alpha,z) \: : \: |z|=\alpha\},\quad 
\pazocal{J}^+:=\{ (\alpha,z) \: : \: |z|>\alpha\},$$
and the functions $\partial_\alpha \qq$ and $\partial_z \qq$, from $\mathbb{R}_+^*\times\mathbb{R}^{d-1}\!\setminus\pazocal{J}^0$ to $\mathcal{L}(\mathbb{R};\mathbb{R}^{d-1})$ and $\mathcal{L}(\mathbb{R}^{d-1})$, respectively, such that:
\begin{equation*}
  \partial_\alpha \qq(\alpha,z) = \left\{
      \begin{array}{lr}
        0 & \mbox{ in } \pazocal{J}^- ,\\
        \frac{z}{|z|} & \mbox{ in } \pazocal{J}^+ ,
      \end{array}
    \right.
\qquad
  \partial_z \qq(\alpha,z) = \left\{
      \begin{array}{lr}
           I_{d-1} & \mbox{in } \pazocal{J}^-, \\
           \frac{\alpha}{|z|}\big(I_{d-1}  -  \frac{1}{|z|^2} z \otimes  z\big) & \mbox{in } \pazocal{J}^+.
      \end{array}
    \right.
\end{equation*}
\end{notation}
\begin{lemma}
 The function $\qq:\mathbb{R}_+^*\times\mathbb{R}^{d-1}\to\mathbb{R}^{d-1}$ is Lipschitz continuous and directionally differentiable, with 
 derivative at $(\alpha,z)$ in the direction $(\beta,h)\in\mathbb{R}\times\mathbb{R}^{d-1}$:
 $$
  \dqq\left((\alpha,z); (\beta,h)\right) = \left\{
      \begin{array}{lr}
        h & \mbox{ in } \pazocal{J}^- ,\\
        h - \maxx\left( h\cdot \frac{z}{|z|}-\beta\right)\frac{z}{|z|} & \mbox{ in } \pazocal{J}^0 , \\
        \frac{\alpha}{|z|}\big(h- \frac{1}{|z|^2} (z\cdot h) z\big) + \beta\frac{z}{|z|} & \mbox{ in } \pazocal{J}^+ .
      \end{array}
    \right.
$$
\label{LemDirDiffQ}
\end{lemma}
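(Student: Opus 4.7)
My plan is to establish Lipschitz continuity first, and then verify the directional derivative formula case by case using the three regions $\pazocal{J}^-,\pazocal{J}^+,\pazocal{J}^0$.

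For Lipschitz continuity, I would use the triangle inequality to decompose
$$|\qq(\alpha_1,z_1)-\qq(\alpha_2,z_2)|\le |\qq(\alpha_1,z_1)-\qq(\alpha_1,z_2)|+|\qq(\alpha_1,z_2)-\qq(\alpha_2,z_2)|.$$
The first term is bounded by $|z_1-z_2|$ since projection onto a fixed closed convex set is 1-Lipschitz. For the second, I would inspect the three sub-cases $|z_2|\le\min(\alpha_1,\alpha_2)$, $|z_2|\ge\max(\alpha_1,\alpha_2)$, and $\alpha_1\le|z_2|\le\alpha_2$ (or the reverse), and check that the difference is bounded by $|\alpha_1-\alpha_2|$ in each.

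For directional differentiability, the plan is to study the limit
$$\dqq((\alpha,z);(\beta,h))=\lim_{t\searrow 0}\frac{\qq(\alpha+t\beta,z+th)-\qq(\alpha,z)}{t}.$$
On $\pazocal{J}^-$, openness implies $|z+th|<\alpha+t\beta$ for $t$ small, so the quotient is exactly $h$. On $\pazocal{J}^+$, openness and the explicit smooth formula $(\alpha+t\beta)(z+th)/|z+th|$ give the result by a routine chain rule computation, which produces the announced $\partial_\alpha\qq\cdot\beta+\partial_z\qq\cdot h$.

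The main obstacle is the transition case $\pazocal{J}^0$, where $|z|=\alpha$ and the branch $\qq$ takes can change with $t$. I would analyze the sign of $|z+th|^2-(\alpha+t\beta)^2 = 2t(z\cdot h-\alpha\beta)+t^2(|h|^2-\beta^2)$: if $\tfrac{z}{|z|}\cdot h<\beta$, then the interior branch is active for small $t>0$ and the quotient equals $h$; if $\tfrac{z}{|z|}\cdot h>\beta$, then the exterior branch is active and the chain rule on $(\alpha+t\beta)(z+th)/|z+th|$ combined with $|z|=\alpha$ gives
$$h + \Bigl(\beta-\tfrac{z\cdot h}{|z|}\Bigr)\tfrac{z}{|z|} = h - \maxx\!\Bigl(\tfrac{z}{|z|}\cdot h-\beta\Bigr)\tfrac{z}{|z|};$$
in the borderline case $\tfrac{z}{|z|}\cdot h=\beta$, the quadratic term in $t$ determines the active branch, but either branch produces the same limit $h$ because the coefficient $\maxx(0)=0$. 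Unifying the three sub-cases reproduces the stated formula. Linearity and continuity of $\maxx$ on $\pazocal{J}^0$ then guarantee the directional derivative is well-defined, and openness of $\pazocal{J}^\pm$ makes the formula valid uniformly in a neighbourhood of each base point.
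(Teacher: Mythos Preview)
The paper does not actually supply a proof of this lemma; it is stated and then used without justification. Your argument is correct and fills that gap. The Lipschitz decomposition is sound (each of your three sub-cases for the $\alpha$-variable indeed yields a bound by $|\alpha_1-\alpha_2|$), and the case analysis on $\pazocal{J}^\pm$ is routine since both regions are open and $\qq$ is smooth there. Your treatment of the transition case $\pazocal{J}^0$ via the sign of
\[
|z+th|^2-(\alpha+t\beta)^2=2t\alpha\Bigl(\tfrac{z}{|z|}\cdot h-\beta\Bigr)+t^2\bigl(|h|^2-\beta^2\bigr)
\]
is exactly the right idea, including the observation that when the linear coefficient vanishes, both the interior and exterior branches yield the same limit $h$, consistent with $\maxx(0)=0$.

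One minor quibble: your closing remark about ``linearity and continuity of $\maxx$'' is misstated, since $\maxx$ is not linear. That sentence is in any case superfluous: existence of the directional derivative at each point of $\pazocal{J}^0$ already follows from the case-by-case limit computation you carried out, and no further argument is needed.
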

\begin{lemma}
    The Nemytskij operator $\qq:L^2(\Gamma_C;\mathbb{R}^*_+)\times\Ll^2(\Gamma_C)\to \Ll^2(\Gamma_C)$ is Lipschitz continuous and directionally differentiable.
\label{LemDirDiffNemytskijQ}
\end{lemma}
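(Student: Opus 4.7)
The plan is to lift the pointwise results from \cref{LemDirDiffQ} to the $L^2$ setting via Lebesgue's dominated convergence theorem, exactly as is hinted at in the paragraph preceding \cref{LemDirDiffMax}. The key observation is that the pointwise Lipschitz constant of $\qq$ on $\mathbb{R}_+^*\times\mathbb{R}^{d-1}$ is independent of $(\alpha,z)$ (projections are 1-Lipschitz in the projected variable, and the dependence in $\alpha$ is also 1-Lipschitz, since moving the radius of the ball by $\delta$ moves the projected point by at most $\delta$), so there exists $C>0$ such that
\begin{equation*}
  |\qq(\alpha_1,z_1) - \qq(\alpha_2,z_2)| \leq C\big(|\alpha_1-\alpha_2| + |z_1-z_2|\big)
\end{equation*}
for every $(\alpha_i,z_i)\in\mathbb{R}_+^*\times\mathbb{R}^{d-1}$. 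Integrating this inequality squared over $\Gamma_C$ immediately yields Lipschitz continuity of the Nemytskij operator from $L^2(\Gamma_C;\mathbb{R}_+^*)\times \Ll^2(\Gamma_C)$ to $\Ll^2(\Gamma_C)$.

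For directional differentiability, I would fix $(\alpha,z)$ in the domain and a direction $(\beta,h)\in L^2(\Gamma_C)\times\Ll^2(\Gamma_C)$, and consider the difference quotient
\begin{equation*}
  Q_t(x) := \frac{1}{t}\big(\qq(\alpha(x)+t\beta(x),z(x)+th(x)) - \qq(\alpha(x),z(x))\big).
\end{equation*}
Pointwise, \cref{LemDirDiffQ} gives $Q_t(x) \to \dqq\big((\alpha(x),z(x));(\beta(x),h(x))\big)$ as $t\searrow 0$ for almost every $x\in\Gamma_C$. The Lipschitz bound above provides the uniform pointwise majoration
\begin{equation*}
  |Q_t(x)| \leq C\big(|\beta(x)| + |h(x)|\big),
\end{equation*}
and the right-hand side lies in $L^2(\Gamma_C)$. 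Dominated convergence then yields convergence of $Q_t$ to the Nemytskij operator associated with $\dqq$ in $\Ll^2(\Gamma_C)$, which is the desired directional derivative.

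The only delicate point I would check carefully is measurability of the limit. Since the partition of $\mathbb{R}_+^*\times\mathbb{R}^{d-1}$ into $\pazocal{J}^-$, $\pazocal{J}^0$, $\pazocal{J}^+$ is defined by the continuous function $(\alpha,z)\mapsto |z|-\alpha$, the preimages under $x\mapsto(\alpha(x),z(x))$ are measurable subsets of $\Gamma_C$, and the three branches of $\dqq$ are measurable functions of $x$ (each is a composition of continuous maps with $\alpha,z,\beta,h$, and on $\pazocal{J}^0$ the $\maxx$ piece is itself continuous). Hence the almost-everywhere limit is a measurable $\Ll^2$ function, and the map $(\beta,h)\mapsto \dqq(\cdot;(\beta,h))$ is well-defined from $L^2\times \Ll^2$ into $\Ll^2(\Gamma_C)$, completing the proof. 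The main obstacle, if any, is really just bookkeeping on $\pazocal{J}^0$: this set may have positive $\Gamma_C$-measure (so one cannot discard it), but the closed-form expression for $\dqq$ there still satisfies the same Lipschitz majoration, so the dominated convergence argument goes through uniformly in $x$.
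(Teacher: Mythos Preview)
Your proof is correct and follows essentially the same approach as the paper: pointwise Lipschitz continuity of $\qq$ gives Lipschitz continuity of the Nemytskij operator, and the uniform Lipschitz bound serves as the dominating function to pass the pointwise directional derivative (from \cref{LemDirDiffQ}) to $\Ll^2(\Gamma_C)$ via dominated convergence. The only cosmetic difference is that the paper bounds the full increment $\left|Q_t-\dqq\right|\leq 2(|\beta|+|h|)$ directly (using both the Lipschitz estimate and the explicit bound $|\dqq|\leq|\beta|+|h|$), whereas you bound $|Q_t|$ alone; both variants lead to the same application of Lebesgue's theorem.
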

\begin{proof}
    First, Lipschitz continuity of this Nemytskij operator follows directly from Lipschitz continuity of $\qq:\mathbb{R}_+^*\times\mathbb{R}^{d-1}\to\mathbb{R}^{d-1}$.
    Then, from \cref{LemDirDiffQ}, it is clear that, for all $(\alpha,z)\in\mathbb{R}_+^*\times\mathbb{R}^{d-1}$ and $(\beta,h)\in\mathbb{R}\times\mathbb{R}^{d-1}$, one has:
    \begin{equation}
         \left| \dqq\left((\alpha,z);(\beta,h)\right)\right| \leq |\beta| + |h|  \:.
         \label{EstPartDerQ}
    \end{equation}
    Let $(\alpha,\zz)\in L^2(\Gamma_C;\mathbb{R}^*_+)\times\Ll^2(\Gamma_C)$ and $(\beta,\hh)\in L^2(\Gamma_C)\times\Ll^2(\Gamma_C)$, and $t>0$. Directional differentiability of $\qq:\mathbb{R}_+^*\times\mathbb{R}^{d-1}\to\mathbb{R}^{d-1}$ yields:
    \begin{equation*}
        \begin{aligned}
            \left|\: \frac{\qq(\alpha+t\beta,\zz+t\hh)-\qq(\alpha,\zz)}{t}- \dqq\left((\alpha,\zz);(\beta,\hh)\right) \:\right| \longrightarrow 0 \hspace{0.5em} \mbox{ a.e.$\!$ on } \Gamma_C.
        \end{aligned}
    \end{equation*}
    Moreover, from estimation \eqref{EstPartDerQ}, along with Lispchitz continuity of $\qq$, 
    one gets:
    \begin{equation*}
        \begin{aligned}
            \left|\: \frac{\qq(\alpha+t\beta,\zz+t\hh)-\qq(\alpha,\zz)}{t}- \dqq\left((\alpha,\zz);(\beta,\hh)\right) \:\right| \leq 2\left(|\beta|+|\hh|\right) \hspace{0.5em} \mbox{ a.e.$\!$ on } \Gamma_C.
        \end{aligned}
    \end{equation*} 
    Since $|\hh|$ and $|\beta|\in L^2(\Gamma_C)$, Lebesgue's dominated convergence theorem finishes the proof.
\end{proof}


\subsubsection{Differentiability of $\Phi_\varepsilon$}


We are now in mesure to state the main results of this work.
\begin{notation}
For any smooth function $f$ defined on $\mathbb{R}^d$, and that does not depend on $\Omega$, we denote $f'[\thetaa]$ or simply $f'$ the following directional derivative:
$$
  f'[\thetaa] := \lim_{t\searrow 0} \:\frac{1}{t}\left( f\circl(\Id+t\thetaa) - f\right) = (\grad f) \thetaa \:.
$$
Using this notation, $\normalExt':=(\gradd \normalExt) \thetaa\:$ and for any $\vv\in\Xx$ we define :
$$
    \vv_{\normalExt'}:=\vv\cdot{\normalExt}'\:,  \hspace{0.5em} \vv_{\tanExt'}:=-\vv\cdot((\gradd \normalExt) \thetaa)\normalExt - (\vv\cdot\normalExt)(\gradd \normalExt) \thetaa = -\vv_{ \normalExt'}\normalExt - \vv_{\normalExt} \normalExt'\:.
$$
For the gap $\,\gG_{\normalExt}':=(\grad \gG_{\normalExt}) \thetaa$ and since $\gG_{\normalExt}$ is the oriented distance function to the smooth boundary $\partial\Omega_{rig}$, $\grad \gG_{\normalExt} = -\normalExt$, which implies that $\gG_{\normalExt}'= -\thetaa\cdot\normalExt$. However, we will still use the notation $\gG_{\normalExt}'$ to emphasize that this term comes from differentiation of the gap. Finally we define 
$$\pazocal{I}_\varepsilon^0:=\{x \in \Gamma_C \: : \: \uu_{\varepsilon,\normalExt}-\gG_{\normalExt}=0\}\subset \Gamma_C\:, \quad \pazocal{J}_\varepsilon^0:=\{x \in \Gamma_C \: : \: |\uu_{\varepsilon,\tanExt}|=\varepsilon\mathfrak{F}s\}\subset \Gamma_C\:,$$ two sets of special interest in the rest of this work.

\end{notation}
\begin{theo} If \cref{A0} holds, then for any $\thetaa\in\Cc^1_b(\mathbb{R}^d)$, $\Phi_\varepsilon$ is strongly differentiable at $t=0^+$.
  \label{ThmExistMDer}
\end{theo}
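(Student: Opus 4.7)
The plan is to form the difference quotient $\boldsymbol{\delta}_{\uu,\varepsilon}^t/t$ where $\boldsymbol{\delta}_{\uu,\varepsilon}^t=\uu_\varepsilon^t-\uu_\varepsilon$, show it is bounded in $\Xx$, identify its weak limit as the unique solution of a well-posed limit variational problem, then upgrade to strong convergence. Concretely, I would subtract formulation \eqref{FV} from \eqref{FVT} exactly as was done to obtain \eqref{FVTMoinsFV}, divide by $t$, and analyse the resulting equation term by term. This mirrors the strategy used for continuity, but now the Taylor expansions must be pushed one order further, and the difference quotients of $\maxx$ and $\qq$ must be controlled via their directional derivatives.

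The first key step is to derive a uniform bound on $\|\boldsymbol{\delta}_{\uu,\varepsilon}^t/t\|_\Xx$. Taking $\boldsymbol{\delta}_{\uu,\varepsilon}^t$ as test function in the difference formulation and dividing by $t^2$, the elastic term contributes $(\alpha_0+O(t))\|\boldsymbol{\delta}_{\uu,\varepsilon}^t/t\|_\Xx^2$ by ellipticity of $a$ and the Taylor expansions of $\Aa$, $\JacV$, $\epsilonn^t$. The two boundary nonlinear terms $T_2$ and $T_3$ split into a "smooth-transport" piece (bounded by $tC\|\boldsymbol{\delta}_{\uu,\varepsilon}^t/t\|_\Xx$) plus an intrinsic piece $\tfrac{1}{\varepsilon}\int_{\Gamma_C}(R_\normalExt^t(\uu_\varepsilon^t)-R_\normalExt(\uu_\varepsilon))(\boldsymbol{\delta}_{\uu,\varepsilon}^t)_\normalExt$ and its tangential analogue; the crucial observation is that $\maxx$ and $\qq$ are both monotone (non-expansive projections), so these intrinsic pieces are nonnegative up to lower-order remainders coming from the variation of $\normalExt$, $\gG_{\normalExt}$ and $\mathfrak{F}s$ along $\thetaa$. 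This yields an inequality of the form $\alpha_0\|\boldsymbol{\delta}_{\uu,\varepsilon}^t/t\|_\Xx^2 \leq C\|\boldsymbol{\delta}_{\uu,\varepsilon}^t/t\|_\Xx + C$, hence boundedness. Extracting a weakly convergent subsequence $\boldsymbol{\delta}_{\uu,\varepsilon}^t/t \rightharpoonup \dot\uu_\varepsilon$ and using the compact embedding $\Hh^{1/2}(\Gamma_C)\hookrightarrow\Ll^2(\Gamma_C)$ together with the directional differentiability of the Nemytskij operators $\maxx$ and $\qq$ (Lemmas \ref{LemDirDiffNemytskijMax} and \ref{LemDirDiffNemytskijQ}), one passes to the limit in the difference quotient equation. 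The limit problem reads, schematically, as a linear variational equation for $\dot\uu_\varepsilon\in\Xx$ of the form
\begin{equation*}
a(\dot\uu_\varepsilon,\vv) + \tfrac{1}{\varepsilon}\!\int_{\Gamma_C}\!\dmaxx(\uu_{\varepsilon,\normalExt}-\gG_{\normalExt};\dot\uu_{\varepsilon,\normalExt}+\uu_{\varepsilon,\normalExt'}-\gG_{\normalExt}')\,\vv_\normalExt + \tfrac{1}{\varepsilon}\!\int_{\Gamma_C}\!\dqq(\cdots)\cdot\vv_\tanExt = \ell_\thetaa(\vv),
\end{equation*}
where $\ell_\thetaa(\vv)$ gathers all the shape terms ($\divv\thetaa$, $\gradd\thetaa$, $\normalExt'$, $(\mathfrak{F}s)'$, $\ff'$, $\tauu'$, $\Aa'$) produced by the Taylor expansions. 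Well-posedness of this limit problem comes from ellipticity of $a$ combined with the fact that the extra operators on the left are monotone (since $\dmaxx(u;\cdot)$ and $\dqq((\alpha,z);(0,\cdot))$ are monotone in their variable direction, as projections onto convex sets); uniqueness then identifies $\dot\uu_\varepsilon$ and forces convergence of the whole net $\boldsymbol{\delta}_{\uu,\varepsilon}^t/t$.

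To pass from weak to strong convergence, I subtract the limit equation from the difference quotient equation, test with $\boldsymbol{\delta}_{\uu,\varepsilon}^t/t - \dot\uu_\varepsilon$, and exploit ellipticity of $a$ and monotonicity of the boundary operators exactly as in the continuity proof: all remainder terms are $o(1)$ by weak-to-strong convergence on $\Gamma_C$ and the dominated convergence argument used in the proof of Lemma \ref{LemDirDiffNemytskijQ}. The main obstacle is the treatment of the critical sets $\pazocal{I}_\varepsilon^0$ and $\pazocal{J}_\varepsilon^0$ where $\maxx$ and $\qq$ are only directionally (not Fréchet) differentiable: here $\dmaxx$ and $\dqq$ depend non-linearly on their direction, which is why we only obtain directional (and not Fréchet) differentiability of $\Phi_\varepsilon$ at this stage; an additional assumption ruling out fat level sets (announced as \cref{A1}) will be required later to upgrade to full shape differentiability in \cref{CorExistSDer}. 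A secondary technical difficulty is that the argument of $\qq$ involves the tangential component $\vv_{\tanExt(t)}$, whose expansion mixes $\normalExt'$ and $\uu_{\varepsilon,\normalExt}$, so the correct "direction" fed to $\dqq$ must carefully combine the material derivative $\dot\uu_\varepsilon$ with the geometric correction $\uu_{\varepsilon,\tanExt'}$.
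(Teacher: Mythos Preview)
Your proposal is correct and follows essentially the same route as the paper: bound the difference quotient via ellipticity plus monotonicity of $\maxx$ and $\qq$, extract a weak limit, pass to the limit using \cref{LemDirDiffNemytskijMax,LemDirDiffNemytskijQ} and the compact trace embedding, establish well-posedness of the limit problem, then upgrade to strong convergence by testing the difference against itself. The only minor deviation is that you invoke monotone operator theory for well-posedness of the limit formulation, whereas the paper recasts it as the Euler equation of a strictly convex, coercive functional and minimizes directly; these are equivalent, and the paper's explicit identification of the convex potential makes the treatment of the nonlinear terms on $\pazocal{I}_\varepsilon^0$ and $\pazocal{J}_\varepsilon^0$ slightly more transparent.
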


\begin{proof}
    In order to prove the differentiability of this map, one has to study the difference $\ww_\varepsilon^t:=\frac{1}{t}(\uu_\varepsilon^t-\uu_\varepsilon)$, which appears when dividing \eqref{FVTMoinsFV} by $t$. Of course, this leads to the formulation: $\frac{1}{t}(T_1(\vv)+T_2(\vv)+T_3(\vv))=\frac{1}{t}T_4(\vv)$. 
Again, from \cite[Section 3.5]{sokolowski1992introduction}, taking $\vv=\ww_\varepsilon^t$ as a test-function, one gets the following estimates for the first and fourth groups of terms:
\begin{equation*}
	\begin{aligned}
    \frac{1}{t}\:T_1(\ww_\varepsilon^t) & \: \geq \: \alpha_0 \norml \ww_\varepsilon^t \normr_{\Xx}^2 - C\norml \ww_\varepsilon^t \normr_{\Xx} \:, \\
    \frac{1}{t}\:T_4(\ww_\varepsilon^t) &\: \leq \:  C\norml \ww_\varepsilon^t \normr_{\Xx} \:.
    \end{aligned}
\end{equation*}

Using the property $(\maxx(a)-\maxx(b))(a-b) \geq 0$, for all $a$, $b \in \mathbb{R}$, one gets for the second group of terms:
\begin{equation*}
    \begin{aligned}
        \frac{1}{t}\:T_2(\ww_\varepsilon^t)\:  & = \:\frac{1}{\varepsilon} \int_{\Gamma_{C}} R_{\normalExt}^{t}(\uu_{\varepsilon}^{t}) \ww_{\varepsilon,\normalExt(t)}^t  \: \frac{1}{t}(\JacB(t)-1) 
        \\
        &\quad + \frac{1}{\varepsilon} \int_{\Gamma_{C}} R_{\normalExt}^{t}(\uu_{\varepsilon}^{t}) 
        \frac{1}{t}(\ww_{\varepsilon,\normalExt(t)}^t - \ww_{\varepsilon,\normalExt}^t)
        + \frac{1}{\varepsilon} \int_{\Gamma_{C}} \frac{1}{t} \left( R_{\normalExt}^{t}(\uu_{\varepsilon}^{t}) - 
        R_{\normalExt}(\uu_{\varepsilon})\right) \ww_{\varepsilon,\normalExt}^t  
        \\
        & \geq \: - C\norml \ww_\varepsilon^t \normr_{\Xx} + \:\frac{1}{\varepsilon} \int_{\Gamma_{C}} \frac{1}{t} \left( R_{\normalExt}(\uu_{\varepsilon}^{t}) - 
        R_{\normalExt}(\uu_{\varepsilon})\right)  \ww_{\varepsilon,\normalExt}^t 
        \\
        & \geq \: - C \norml \ww_\varepsilon^t \normr_{\Xx} \:.
    \end{aligned}
\end{equation*}
One can estimate the third term in the same way, using this time the properties of $\qq$, and especially the property $(\qq(\alpha,z_1)-\qq(\alpha,z_2))(z_1-z_2)\geq 0$, for all $\alpha\in\mathbb{R}_+^*$, $z_1,z_2\in\mathbb{R}^{d-1}$.
\begin{equation*}
    \begin{aligned}
        \frac{1}{t}\:T_3(\ww_\varepsilon^t) & =  \: \frac{1}{\varepsilon} \int_{\Gamma_{C}} S_{\tanExt}^t(\uu_{\varepsilon}^{t}) \ww_{\varepsilon,\tanExt(t)}^t \frac{1}{t}(\JacB(t)-1) 
        + \frac{1}{\varepsilon} \int_{\Gamma_{C}} S_{\tanExt}^t(\uu_{\varepsilon}^{t}) \frac{1}{t}(\ww_{\varepsilon,\tanExt(t)}^t-\ww_{\varepsilon,\tanExt}^t) 
        \\
        &\quad + \frac{1}{\varepsilon} \int_{\Gamma_{C}} \frac{1}{t}\left( S_{\tanExt}^t(\uu_{\varepsilon}^{t}) - S_{\tanExt}(\uu_{\varepsilon}) \right) \ww_{\varepsilon,\tanExt}^t  
        \\
        & \geq \: - C\norml \ww_\varepsilon^t \normr_{\Xx} + \frac{1}{\varepsilon} \int_{\Gamma_{C}} \frac{1}{t}\left( S_{\tanExt}(\uu_{\varepsilon}^{t}) - S_{\tanExt}(\uu_{\varepsilon}) \right)\ww_{\varepsilon,\tanExt}^t  
        \\
        & \geq \: - C\norml \ww_\varepsilon^t \normr_{\Xx} \:.
    \end{aligned}
\end{equation*}
Combining these four estimates leads to boundedness of $\ww_\varepsilon^t$ in $\Xx$ (uniformly in $t$). Thus for any sequence $\{t_k\}_k$ decreasing to 0, there exists a weakly convergent subsequence of $\{ \ww_\varepsilon^{t_k} \}_k$ (still denoted $\{ \ww_\varepsilon^{t_k} \}_k$), say $\ww_\varepsilon^{t_k} \rightharpoonup \ww_\varepsilon \in \Xx$. 
\vspace{0.5em}

The next step is to characterize this weak limit as the solution of a variational formulation. This can be done by taking $t=t_k$, then passing to the limit $k\to+\infty$ in formulation \eqref{FVTMoinsFV} divided by $t$. Before doing that, the bilinear form $a'$ and the linear form $\epsilonn'$, which will be very useful, are introduced as in \cite[Section 3.5]{sokolowski1992introduction}: for any $\uu$, $\vv \in \Xx$,
\begin{equation*}
    \begin{aligned} 
        &a'(\uu,\vv) := \int_\Omega \big\{ \Aa:\epsilonn'(\uu):\epsilonn(\vv) + \Aa:\epsilon(\uu):\epsilonn'(\vv) 
        \\ &\hspace{0.25\textwidth} 
        + (\divv \thetaa \: \Aa + \gradd \Aa \: \thetaa):\epsilonn(\uu):\epsilonn(\vv) \big\} \:, \\
        &\epsilonn'(\vv) := -\frac{1}{2}\left( \gradd \vv \gradd \thetaa + {\gradd \thetaa}^T {\gradd \vv}^T \right).
    \end{aligned}
\end{equation*}

Now, passing to the limit $k\to+\infty$ in $T_1(\vv)$ is rather straightforward and gives:
\begin{equation}
    \frac{1}{t_k}\:T_1(\vv) \: \longrightarrow \: a(\ww_\varepsilon,\vv) + a'(\uu_\varepsilon,\vv) \:.
    \label{FVMDerT1}
\end{equation}
For the second group of terms, one gets:
\begin{equation*}
    \begin{aligned}
        \frac{1}{t_k}\:T_2(\vv) \: \longrightarrow \: & \: \frac{1}{\varepsilon} \int_{\Gamma_{C}} R_{\normalExt}(\uu_{\varepsilon}) \left(\vv \cdot (\divv_\Gamma \thetaa \normalExt + \normalExt'\right) \\
        & \qquad + \lim_k \frac{1}{\varepsilon} \int_{\Gamma_{C}} \frac{1}{t_k} \left( R_{\normalExt}^{t_k}(\uu_{\varepsilon}^{t_k})- R_{\normalExt}(\uu_{\varepsilon})\right) (\vv \cdot \normalExt) \:.
    \end{aligned}
\end{equation*}
The key ingredient to deal with the second limit is the directional differentiability of the function $\maxx$ from $L^2(\Gamma_C)$ to $L^2(\Gamma_C)$, see \cref{app:DirDiffQ}. The candidate function for the derivative of $t\mapsto R_{\normalExt}^{t}(\uu_{\varepsilon}^{t})$ at $t=0^+$ is:
$$
    R_{\normalExt}'(\uu_{\varepsilon}):=
    \dmaxx\left( \uu_{\varepsilon,\normalExt}-\gG_{\normalExt} ; \zz_{\varepsilon,\normalExt}^{\thetaa} \right)\:.
$$
where $\zz_{\varepsilon,\normalExt}^{\thetaa} := \ww_{\varepsilon,\normalExt}+\uu_{\varepsilon,\normalExt'}-\gG_{\normalExt}'$. Let us show strong convergence in $L^2(\Gamma_C)$ to this candidate function by estimating:
\begin{equation*}
    \begin{aligned}
        &\norml \frac{1}{t_k} \left( R_{\normalExt}^{t_k}(\uu_{\varepsilon}^{t_k}) - R_{\normalExt}(\uu_{\varepsilon})\right) - R_{\normalExt}'(\uu_{\varepsilon}) \normr_{0,\Gamma_C}  
        \\ & \: 
        \leq \: \left\lVert \frac{1}{t_k} \left( \maxx\left(\uu_{\varepsilon,\normalExt(t_k)}^{t_k}-\gG_{\normalExt}(t_k)\right) -  \maxx\left(\uu_{\varepsilon,\normalExt}-\gG_{\normalExt}+\, t_k\zz_{\varepsilon,\normalExt}^{\thetaa}\right)\right) \right\lVert_{0,\Gamma_C} 
        \\ & \qquad 
        +  \bigg\lVert \frac{1}{t_k} \left( \maxx\left(\uu_{\varepsilon,\normalExt}-\gG_{\normalExt}+\,t_k \zz_{\varepsilon,\normalExt}^{\thetaa} \right)- \maxx\left(\uu_{\varepsilon,\normalExt}-\gG_{\normalExt}\right)\right)
        - R_{\normalExt}'(\uu_{\varepsilon}) \bigg\lVert_{0,\Gamma_C} 
        \\ & \: 
        \leq \: \norml \ww_{\varepsilon}^{t_k} - \ww_{\varepsilon} \normr_{0,\Gamma_C} + t_k C\left( 1 + \norml \ww_\varepsilon \normr_{0,\Gamma_C} + \norml \uu_\varepsilon \normr_{0,\Gamma_C} \right)
        \\ & \quad 
        +  \bigg\lVert \frac{1}{t_k} \left( \maxx\left(\uu_{\varepsilon,\normalExt}-\gG_{\normalExt}+\,t_k \zz_{\varepsilon,\normalExt}^{\thetaa}\right)- \maxx\left(\uu_{\varepsilon,\normalExt}-\gG_{\normalExt}\right)\right)
        - \dmaxx\left( \uu_{\varepsilon,\normalExt}-\gG_{\normalExt} ; \zz_{\varepsilon,\normalExt}^{\thetaa} \right) \bigg\lVert_{0,\Gamma_C}
    \end{aligned}
\end{equation*}
The first term goes to 0 due to compact embedding, and the last one also goes to 0 using directional differentiability of the function $\maxx$ from $L^2(\Gamma_C)$ to $L^2(\Gamma_C)$.
This finally leads for the second group of terms:
\begin{equation}
        \frac{1}{t_k}\:T_2(\vv) \longrightarrow \frac{1}{\varepsilon} \int_{\Gamma_{C}} R_{\normalExt}(\uu_{\varepsilon}) 
        \left(\vv_{\normalExt} \divv_\Gamma \thetaa + \vv_{\normalExt'} \right)
        + \frac{1}{\varepsilon} \int_{\Gamma_C}  R_{\normalExt}'(\uu_{\varepsilon}) \vv_{\normalExt}.
    \label{FVMDerT2}
\end{equation}
From \cref{LemDirDiffMax}, 
$R_{\normalExt}'(\uu_{\varepsilon}) = \maxx(\zz_{\varepsilon,\normalExt}^{\thetaa})$ on $\pazocal{I}_\varepsilon^0$.
The function $\maxx:\mathbb{R}\to\mathbb{R}$ being non-linear, the limit formulation is non-linear in $\thetaa$ if $\pazocal{I}_\varepsilon^0$ is not of null measure.
For the third group of terms, one gets:
\begin{equation*}
    \begin{aligned}
        \frac{1}{t_k}\:T_3(\vv) \: \longrightarrow \: & \: \frac{1}{\varepsilon} \int_{\Gamma_{C}} S_{\tanExt}(\uu_{\varepsilon}) \left(\vv_{\tanExt} \divv_\Gamma \thetaa + \vv_{\tanExt'} \right) \\
        & \qquad + \lim_k \frac{1}{\varepsilon} \int_{\Gamma_{C}} \frac{1}{t_k} \left( S_{\tanExt,t_k}(\uu_{\varepsilon}^{t_k})- S_{\tanExt}(\uu_{\varepsilon})\right) \vv_{\tanExt} \:.
    \end{aligned}
\end{equation*}
The key ingredient is the directional differentiability of the Nemytskij operator associated to $\qq$, see \cref{app:DirDiffQ}. The candidate function for the derivative of $t\mapsto S_{\tanExt}^t(\uu_{\varepsilon}^{t})$ at $t=0^+$ is:
$$
    S_{\tanExt}'(\uu_{\varepsilon}):=\dqq\left( (\varepsilon\mathfrak{F}s,\uu_{\varepsilon,\tanExt}) ; \left(\varepsilon\grad(\mathfrak{F}s)\thetaa, \zz_{\varepsilon,\tanExt}^{\thetaa} \right) \right)\:,
$$
where $\zz_{\varepsilon,\tanExt}^{\thetaa}:=\ww_{\varepsilon,\tanExt}+\uu_{\varepsilon,\tanExt'}$. Another series of estimations gives strong convergence to this candidate function in $\Ll^2(\Gamma_C)$.
\begin{equation*}
    \begin{aligned}
        &\norml \frac{1}{t_k} \left( S_{\tanExt,t_k}(\uu_{\varepsilon}^{t_k}) - S_{\tanExt}(\uu_{\varepsilon})\right) - S_{\tanExt}'(\uu_{\varepsilon}) \normr_{0,\Gamma_C}  
        \\ & 
        \leq \: \left\lVert \frac{1}{t_k} \left( \qq\left( \varepsilon (\mathfrak{F}s)(t_k),\uu_{\varepsilon,\tanExt(t_k)}^{t_k} \right)
        - \qq\left( \varepsilon (\mathfrak{F}s + t_k\grad(\mathfrak{F}s)\thetaa) , \uu_{\varepsilon,\tanExt} + t_k \zz_{\varepsilon,\tanExt}^{\thetaa}\right) \right) \right\lVert_{0,\Gamma_C} 
        \\ & \ \
        +  \bigg\lVert \frac{1}{t_k} \left( \qq\left( \varepsilon (\mathfrak{F}s + t_k\grad(\mathfrak{F}s)\thetaa) , \uu_{\varepsilon,\tanExt} + t_k \zz_{\varepsilon,\tanExt}^{\thetaa}\right)
        - \qq\left( \varepsilon \mathfrak{F}s,\uu_{\varepsilon,\tanExt} \right) \right)
        - S_{\tanExt}'(\uu_{\varepsilon}) \bigg\lVert_{0,\Gamma_C} 
        \\ & 
        \leq \: \norml \ww_{\varepsilon}^{t_k} - \ww_{\varepsilon} \normr_{0,\Gamma_C} + C t_k \left( \varepsilon + \norml \ww_\varepsilon \normr_{0,\Gamma_C} + \norml \uu_\varepsilon \normr_{0,\Gamma_C} \right)
        \\ & \ \
        + \bigg\lVert \frac{1}{t_k} \left( \qq\left( \varepsilon (\mathfrak{F}s + t_k\grad(\mathfrak{F}s)\thetaa) , \uu_{\varepsilon,\tanExt} + t_k \zz_{\varepsilon,\tanExt}^{\thetaa}\right)
        - \qq\left( \varepsilon \mathfrak{F}s,\uu_{\varepsilon,\tanExt} \right) \right)
        - S_{\tanExt}'(\uu_{\varepsilon}) \bigg\lVert_{0,\Gamma_C} \!.
    \end{aligned}
\end{equation*}
Due to compact embedding and directional differentiability 
for $\qq$, all terms on the right hand side converge to 0.  Thus, 
\begin{equation}
    \begin{aligned}
        \frac{1}{t_k}\:T_3(\vv) \: \longrightarrow \: & \: \frac{1}{\varepsilon} \int_{\Gamma_{C}} S_{\tanExt}(\uu_{\varepsilon}) \left(\vv_{\tanExt} \divv_\Gamma \thetaa + \vv_{\tanExt'} \right) 
        + \frac{1}{\varepsilon} \int_{\Gamma_{C}} S_{\tanExt}'(\uu_{\varepsilon}) \vv_{\tanExt} \:.
    \end{aligned}
    \label{FVMDerT3}
\end{equation}
From \cref{LemDirDiffQ}, $S_{\tanExt}'(\uu_{\varepsilon})$ is non linear uniquely on $\pazocal{J}_\varepsilon^0$ where it uses the $\maxx$ function. Therefore the limit variational formulation is non linear only on $ \pazocal{J}_\varepsilon^0$.

Using once again the results from \cite[Section 3.5]{sokolowski1992introduction} gives 
\begin{equation}
    \frac{1}{t_k}\:T_4(\vv) \: \longrightarrow \: \int_\Omega \left(\divv \thetaa \: \ff + \gradd\ff \thetaa\right)\vv + \int_{\Gamma_N} \left(\divv_\Gamma \thetaa \: \tauu + \gradd\tauu \thetaa\right) \vv \:.
    \label{FVMDerT4}
\end{equation}
Combining \eqref{FVMDerT1}, \eqref{FVMDerT2}, \eqref{FVMDerT3} and \eqref{FVMDerT4}, and using the Heaviside function $H$ and $\partial_\alpha$ and $\partial_z$ defined  
in \cref{app:DirDiffQ}, one gets that $\ww_\varepsilon\in \Xx$ is the solution of
\begin{equation}
        b_\varepsilon(\ww_\varepsilon,\vv) + \frac{1}{\varepsilon} \prodL2{  R_{\normalExt}'(\uu_{\varepsilon}), \vv_{\normalExt} }{\pazocal{I}_\varepsilon^0} 
        + \frac{1}{\varepsilon} \prodL2{ S_{\tanExt}'(\uu_{\varepsilon}) , \vv_{\tanExt} }{\pazocal{J}_\varepsilon^0} = L_\varepsilon[\thetaa](\vv) \:, \hspace{1em} 
        \forall \vv \in \Xx ,
    \label{FVMDer}
\end{equation} 
where the bilinear form $b_\varepsilon$ and linear form $L_\varepsilon[\thetaa]$ are defined as, for any $\uu$, $\vv\in\Xx$:
\begin{equation*}
    \begin{aligned}
        b_\varepsilon(\uu,\vv):=a(\uu,\vv) &+ \frac{1}{\varepsilon} \prodL2{H(\uu_{\varepsilon,\normalExt}-\gG_{\normalExt}) \uu_{\normalExt} , \vv_{\normalExt}}{\Gamma_C\setminus\pazocal{I}_\varepsilon^0 } \\
        \:& + \frac{1}{\varepsilon} \prodL2{\partial_z \qq(\varepsilon\mathfrak{F}s,\uu_{\varepsilon,\tanExt}) \uu_{\tanExt} , \vv_{\tanExt}}{\Gamma_C\setminus\pazocal{J}_\varepsilon^0} ,
    \end{aligned}
\end{equation*}
\begin{equation*}
    \begin{aligned}
        L_\varepsilon[\thetaa](\vv) := &\int_\Omega (\divv \thetaa \: \ff + \gradd \ff  \thetaa) \vv + \int_{\Gamma_N} (\divv_\Gamma \thetaa \: \tauu+ \gradd \tauu \thetaa)\vv - \:a'(\uu_\varepsilon,\vv) \\
        & - \frac{1}{\varepsilon} \int_{\Gamma_C} R_{\normalExt}(\uu_{\varepsilon})\left(\vv \cdot (\divv_\Gamma \thetaa \normalExt + \normalExt')\right) \\
        & - \frac{1}{\varepsilon} \int_{\Gamma_C\setminus\pazocal{I}_\varepsilon^0} H(\uu_{\varepsilon,\normalExt}-\gG_{\normalExt}) \left( \uu_{\varepsilon,\normalExt'} - \gG_{\normalExt}' \right)\vv_{\normalExt} \\
        & - \frac{1}{\varepsilon} \int_{\Gamma_{C}} S_{\tanExt}(\uu_{\varepsilon}) \left(\vv_{\tanExt} \divv_\Gamma \thetaa + \vv_{\tanExt'} \right) \\
        & - \int_{\Gamma_{C}\setminus\pazocal{J}_\varepsilon^0} \left( \partial_\alpha \qq (\varepsilon\mathfrak{F}s,\uu_{\varepsilon,\tanExt}) \grad(\mathfrak{F}s)\thetaa 
        + 
        \frac{1}{\varepsilon} \partial_z \qq (\varepsilon\mathfrak{F}s,\uu_{\varepsilon,\tanExt}) \uu_{\varepsilon,\tanExt'}\right) \vv_{\tanExt}\:.
    \end{aligned}
\end{equation*}
Due to the regularities of $\normalExt$, $\gG_{\normalExt}$, $\thetaa$, $\ff$, $\tauu$, $\mathfrak{F}s$, $\uu_\varepsilon$, and uniform boundedness of both $\partial_\alpha\qq$, $\partial_z\qq$, it is clear that $L_\varepsilon[\thetaa] \in \Xx^*$ for any $\thetaa$. 
From the uniform boundedness and positivity of $H(\cdot)$ and $\partial_z \qq(\cdot,\cdot)$, one has, for all $\uu$, $\vv$, in $\Xx$
\begin{equation*}
  \begin{aligned}
   \left| \frac{1}{\varepsilon} \prodL2{H(\uu_{\varepsilon,\normalExt}-\gG_{\normalExt}) \uu_{\normalExt} , \vv_{\normalExt}}{\Gamma_C\setminus\pazocal{I}_\varepsilon^0} \right| &\leq \frac{K}{\varepsilon}\norml \uu \normr_{\Xx} \norml \vv \normr_{\Xx} \:, \\
   \frac{1}{\varepsilon} \prodL2{H(\uu_{\varepsilon,\normalExt}-\gG_{\normalExt}) \uu_{\normalExt} , \uu_{\normalExt}}{\Gamma_C\setminus\pazocal{I}_\varepsilon^0} &= \frac{1}{\varepsilon} \int_{\Gamma_C\setminus\pazocal{I}_\varepsilon^0} H(\uu_{\varepsilon,\normalExt}-\gG_{\normalExt}) \:(\uu_{\normalExt})^2 \geq 0 \: , 
   \\
   \left| \frac{1}{\varepsilon} \prodL2{\partial_z\qq(\varepsilon\mathfrak{F}s,\uu_{\varepsilon,\tanExt}) \uu_{\tanExt} , \vv_{\tanExt}}{\Gamma_C\setminus\pazocal{J}_\varepsilon^0} \right| 
   & \leq \frac{K}{\varepsilon}\norml \uu \normr_{\Xx} \norml \vv \normr_{\Xx} \:, 
   \\
   \frac{1}{\varepsilon} \prodL2{\partial_z\qq(\varepsilon\mathfrak{F}s,\uu_{\varepsilon,\tanExt}) \uu_{\tanExt} , \uu_{\tanExt}}{\Gamma_C\setminus\pazocal{J}_\varepsilon^0} &= \frac{1}{\varepsilon} \int_{\Gamma_C\setminus\pazocal{J}_\varepsilon^0}  \left(\partial_z\qq(\varepsilon\mathfrak{F}s,\uu_{\varepsilon,\tanExt}) \uu_{\tanExt}\right) \uu_{\tanExt} \geq 0 \: .
   \end{aligned}
\end{equation*} 
Thus  $b_\varepsilon$ is continuous and coercive over $\Xx \times \Xx$. Because of the non-linearities occuring on the sets $\pazocal{I}_\varepsilon^0$ and $\pazocal{J}_\varepsilon^0$, well-posedness of \eqref{FVMDer} is proved using an optimization argument. Let us introduce the following functionals, defined for any $\ww\in\Xx$:
$$
   \begin{aligned}
   \tilde{\phi}(\ww) := &\frac{1}{2} b_\varepsilon(\ww,\ww) - L_\varepsilon[\thetaa](\ww) + \phi_{\normalExt}(\ww) + \phi_{\tanExt}(\ww) \:,\\[1em]
        \phi_{\normalExt}(\ww) := \ & \frac{1}{2\varepsilon} \norml \maxx\left( \ww_{\normalExt}+\uu_{\varepsilon,\normalExt'}-\gG_{\normalExt}' \right) \normr^2_{0,\pazocal{I}_\varepsilon^0} , \\
        \phi_{\tanExt}(\ww) := \ & 
        \frac{1}{2\varepsilon} \norml \ww_{\tanExt}+\uu_{\varepsilon,\tanExt'} \normr^2_{0,\pazocal{J}_\varepsilon^0} \\ 
        & + \frac{1}{2\varepsilon} \norml 
        \maxx\left(
        -\varepsilon\grad(\mathfrak{F}s)\thetaa + \left(\ww_{\tanExt}+\uu_{\varepsilon,\tanExt'}\right)\cdot \frac{ \uu_{\varepsilon,\tanExt}}{| \uu_{\varepsilon,\tanExt}|}
        \right)
        \normr^2_{0,\pazocal{J}_\varepsilon^0} \:.
   \end{aligned}
$$
Obviously, solving \eqref{FVMDer} is equivalent to finding a minimum of $\tilde{\phi}$ over $\Xx$. Both $\phi_{\normalExt}$ and $\phi_{\tanExt}$ are convex, continuous and positive.   Due to the properties of $b_\varepsilon$ and $L_\varepsilon[\thetaa]$, $\tilde{\phi}$ is strictly convex, coercive and continuous, and one gets that problem \eqref{FVMDer} has a unique solution $\ww_\varepsilon$. Uniqueness also proves that the whole sequence $\{ \ww_\varepsilon^{t_k}\}_k$ converges weakly to $\ww_\varepsilon$.

\paragraph{Strong convergence.} 
Strong convergence is proved taking $\boldsymbol{\delta}_{\ww,\varepsilon}^t:=\ww_\varepsilon^t-\ww_\varepsilon$ as test-function and subtracting: $\frac{1}{t}$\eqref{FVTMoinsFV} $-$ \eqref{FVMDer}. Gathering all terms properly enable to get the following estimation:
\begin{equation*}
    \begin{aligned}
        \alpha_0 \norml \boldsymbol{\delta}_{\ww,\varepsilon}^t \normr_{\Xx}^2 \ \leq & \ \left( tC + \norml \frac{1}{t} \left( R_{\normalExt}^{t}(\uu_{\varepsilon}^{t}) - R_{\normalExt}(\uu_{\varepsilon})\right) - R_{\normalExt}'(\uu_{\varepsilon}) \normr_{0,\Gamma_C} \right.\\
        & \ \ \left. + \norml \frac{1}{t} \left( S_{\tanExt}^t(\uu_{\varepsilon}^{t}) - S_{\tanExt}(\uu_{\varepsilon})\right) - S_{\tanExt}'(\uu_{\varepsilon}) \normr_{0,\Gamma_C}\right) \norml \boldsymbol{\delta}_{\ww,\varepsilon}^t \normr_{\Xx} \:.
    \end{aligned}
\end{equation*}
It has already been showed that all terms in parentheses go to $0$, which yields strong convergence of $\ww_\varepsilon^t$ to $\ww_\varepsilon$ in $\Xx$.
\end{proof}

Existence and uniqueness of the limit $\frac{1}{t}(\uu_\varepsilon^t-\uu_\varepsilon)$ have been established. In other words, it has been proved that $\uu_\varepsilon$ admits a strong material derivative in any direction $\thetaa$, namely $\ww_\varepsilon = \dot{\uu}_\varepsilon(\Omega)[\thetaa] \in \Xx$, or simply $\ww_\varepsilon = \dot{\uu}_\varepsilon\in \Xx$. Nevertheless, as mentionned in the previous proof, the map $\thetaa\mapsto \dot{\uu}_\varepsilon(\Omega)[\thetaa]$ fails to be linear  on $\pazocal{I}_\varepsilon^0 \cup \pazocal{J}_\varepsilon^0$ due to non-Gâteaux-differentiability of $\maxx$ and $\qq$. Thus, some additional assumptions are required.
%
%
A rather straightforward way to get around this is to assume that for a fixed value of $\varepsilon$, those sets are of measure zero: 

\begin{hypothesis}\label{A1}
    The sets $\pazocal{I}_\varepsilon^0$ and $\pazocal{J}_\varepsilon^0$ are of measure 0.
\end{hypothesis}

Note that, due to \eqref{CL2:1}, $x\in \pazocal{I}_\varepsilon^0$ implies that both $\uu_{\varepsilon,\normalExt}(x)-\gG_{\normalExt}(x)=0$ and $\sigmaa_{\normalInt\normalExt}(\uu_\varepsilon) (x)=0$, which means that $x$ is in contact but there is no contact pressure. On the other hand, by definition, a point $x\in \pazocal{J}_\varepsilon^0$ is at the same time in sliding contact and in sticking contact. In the case of the non-penalty formulation, $\pazocal{I}_\varepsilon^0$ is sometimes referred to as the \textit{weak contact set}, while $\pazocal{J}_\varepsilon^0$ is sometimes referred to as the \textit{weak sticking set} (see~\cite{beremlijski2014shape} for contact with Coulomb friction). Following these denominations, let us refer to the points of  $\pazocal{I}_\varepsilon^0$ and $\pazocal{J}_\varepsilon^0$ as \textit{weak contact points}, and \textit{weak sticking points}, respectively.

For example, \cref{A1} is satisfied when all weak contact points and all weak sticking points represent a finite number of points in 2D or a finite number of curves in 3D.

\begin{remark}
    Both sets can be gathered under the more general denomination of \textit{biactive sets}, borrowed from optimal control (see~\cite{wachsmuth2014strong} in the case of the obstacle problem). Moreover, in optimal control problems related to variational inequalities, Gâteaux differentiability of the solution with respect to the control parameter is obtained under the \textit{strict complementarity condition}, see for example \cite{bonnans2013perturbation}. This condition is actually quite difficult to explicit and to use in practice, see \cite[Lemma 2.6]{rauls2018generalized}, and \cite{wachsmuth2014strong} for a discussion. However, in our context, the variational inequality has  been regularized by the penalty approach. Therefore our conditions are simpler to express: the biactive sets are of zero measure.
\end{remark}

\begin{cor}  \label{CorExistSDer}
  If \cref{A0} and \cref{A1} hold, then $\uu_\varepsilon$ solution of \eqref{FV} is (strongly) shape differentiable in $\Ll^2(\Omega)$. For any $\thetaa\in\Cc^1_b(\mathbb{R}^d)$, its shape derivative in the direction $\thetaa$ writes $d\mathbf{u}_\varepsilon(\Omega)[\thetaa] :=  \dot{\uu}_\varepsilon(\Omega)[\thetaa] - \gradd \uu_\varepsilon \thetaa$, where $\dot{\uu}_\varepsilon(\Omega)[\thetaa]$ is the unique solution of 
    \begin{equation}
        b_\varepsilon(\dot{\uu}_\varepsilon,\vv) = L_\varepsilon[\thetaa](\vv) \:, \hspace{1em} \forall \vv \in \Xx .
        \label{FVMDerBis}
    \end{equation}
    Moreover, $\{\dot{\uu}_\varepsilon\}_\varepsilon$ and $\{d\mathbf{u}_\varepsilon\}_\varepsilon$ are uniformly bounded in $\Xx$ and $\Ll^2(\Omega)$, respectively.
\end{cor}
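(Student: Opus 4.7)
The plan is to deduce \cref{CorExistSDer} directly from \cref{ThmExistMDer} by exploiting \cref{A1}. Under \cref{A1}, both sets $\pazocal{I}_\varepsilon^0$ and $\pazocal{J}_\varepsilon^0$ have zero surface measure, so the two non-linear boundary integrals in \eqref{FVMDer} vanish, and the equation characterizing the material derivative $\dot{\uu}_\varepsilon=\ww_\varepsilon$ reduces to the purely linear variational problem \eqref{FVMDerBis}. Unique solvability in $\Xx$ is then an application of Lax--Milgram, all three ingredients (continuity and coercivity of $b_\varepsilon$, and $L_\varepsilon[\thetaa]\in\Xx^*$) having been established in the proof of \cref{ThmExistMDer}.

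The next step is to turn this directional derivative into a genuine shape derivative, which amounts to showing that $\thetaa\mapsto\dot{\uu}_\varepsilon[\thetaa]$ is linear and continuous from $\Cc^1_b(\mathbb{R}^d)$ into $\Xx$. Linearity is immediate: inspection of the explicit expression of $L_\varepsilon[\thetaa]$ obtained in the proof of \cref{ThmExistMDer} shows that each of its terms is linear in $\thetaa$, and uniqueness of the solution of \eqref{FVMDerBis} transfers this linearity to $\dot{\uu}_\varepsilon$. For continuity, I would test \eqref{FVMDerBis} with $\vv=\dot{\uu}_\varepsilon$ and combine coercivity of $b_\varepsilon$ with a term-by-term estimate $\norml L_\varepsilon[\thetaa] \normr_{\Xx^*}\leq C\norml\thetaa\normr_{1,\infty}$. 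Setting $d\uu_\varepsilon[\thetaa]:=\dot{\uu}_\varepsilon[\thetaa]-\gradd\uu_\varepsilon\thetaa$ then defines a linear continuous map from $\Cc^1_b(\mathbb{R}^d)$ into $\Ll^2(\Omega)$. The reason one obtains shape differentiability in $\Ll^2(\Omega)$ rather than in $\Xx$ is precisely that $\uu_\varepsilon\in\Hh^1(\Omega)$, so the correction term $\gradd\uu_\varepsilon\thetaa$ belongs only to $\Ll^2(\Omega)$ in general.

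The hard part is the $\varepsilon$-uniform boundedness. Strong convergence $\uu_\varepsilon\to\uu$ in $\Xx$ from the penalty method already yields a uniform bound on $\{\gradd\uu_\varepsilon\thetaa\}_\varepsilon$ in $\Ll^2(\Omega)$, so everything reduces to an $\varepsilon$-uniform bound on $\dot{\uu}_\varepsilon$ in $\Xx$, and by coercivity of $b_\varepsilon$ to an $\varepsilon$-uniform bound on $\norml L_\varepsilon[\thetaa] \normr_{\Xx^*}$. The delicate terms are those carrying an explicit $\frac{1}{\varepsilon}$. The quantities $\frac{1}{\varepsilon}R_{\normalExt}(\uu_\varepsilon)$ and $\frac{1}{\varepsilon}S_{\tanExt}(\uu_\varepsilon)$ should be reinterpreted through \eqref{CL2:1}-\eqref{CL2:2} as the contact stresses $-\sigmaa_{\normalInt\normalExt}(\uu_\varepsilon)$ and $-\sigmaa_{\normalInt\tanExt}(\uu_\varepsilon)$; the tangential one is immediately bounded by $\mathfrak{F}s$, the normal one by a duality argument on the trace thanks to the penalty convergence. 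The remaining $\frac{1}{\varepsilon}\partial_z\qq$ contribution must be split region by region on $\Gamma_C\setminus\pazocal{J}_\varepsilon^0$: on $\pazocal{J}^+$ the factor $\varepsilon\mathfrak{F}s$ inside $\partial_z\qq$ cancels the $\frac{1}{\varepsilon}$, whereas on $\pazocal{J}^-$ one must exploit the collapse $|\uu_{\varepsilon,\tanExt}|<\varepsilon\mathfrak{F}s$ of the sticking zone to absorb the prefactor. Once these estimates are assembled, the uniform bound on $\dot{\uu}_\varepsilon$ in $\Xx$, and thus on $d\uu_\varepsilon$ in $\Ll^2(\Omega)$ by the triangle inequality, follows.
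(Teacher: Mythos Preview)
Your proposal is correct up to and including the linearity and continuity of $\thetaa\mapsto\dot{\uu}_\varepsilon[\thetaa]$, and this part matches the paper almost verbatim. The gap is in your argument for the $\varepsilon$-uniform bound on $\dot{\uu}_\varepsilon$.

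You try to obtain it by bounding $\|L_\varepsilon[\thetaa]\|_{\Xx^*}$ uniformly in $\varepsilon$ and then testing \eqref{FVMDerBis} with $\vv=\dot{\uu}_\varepsilon$. That does not work. Look at the term
\[
-\frac{1}{\varepsilon}\int_{\Gamma_C\setminus\pazocal{I}_\varepsilon^0} H(\uu_{\varepsilon,\normalExt}-\gG_{\normalExt})\,(\uu_{\varepsilon,\normalExt'}-\gG_{\normalExt}')\,\vv_{\normalExt}
\]
in $L_\varepsilon[\thetaa]$: on the active contact set $H=1$, while $\uu_{\varepsilon,\normalExt'}-\gG_{\normalExt}'=\uu_\varepsilon\cdot(\gradd\normalExt)\thetaa+\thetaa\cdot\normalExt$ is $O(1)$, so this contribution is genuinely $O(1/\varepsilon)$ in $\Xx^*$. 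You did not address this term at all. Your treatment of the $\frac{1}{\varepsilon}\partial_z\qq$ term on $\pazocal{J}^-$ has the same flaw: there $\partial_z\qq=I_{d-1}$ and the integrand is $\uu_{\varepsilon,\tanExt'}=-\uu_{\varepsilon,\normalExt'}\normalExt-\uu_{\varepsilon,\normalExt}\normalExt'$, which has nothing to do with $|\uu_{\varepsilon,\tanExt}|<\varepsilon\mathfrak{F}s$, so the ``collapse of the sticking zone'' cannot absorb the prefactor.

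The paper does not estimate $L_\varepsilon[\thetaa]$ at all. Instead it exploits the structural match between the singular parts of $b_\varepsilon$ and of $L_\varepsilon[\thetaa]$ by choosing the test function
\[
\vv=\dot{\uu}_\varepsilon+\tilde{\uu}_\varepsilon,\qquad \tilde{\uu}_\varepsilon:=(\uu_{\varepsilon,\normalExt'}-\gG_{\normalExt}')\,\normalExt-\uu_{\varepsilon,\normalExt}\,\normalExt',
\]
which is uniformly bounded in $\Xx$. With this choice, $\vv_{\normalExt}=\dot{\uu}_{\varepsilon,\normalExt}+\uu_{\varepsilon,\normalExt'}-\gG_{\normalExt}'$ and $\vv_{\tanExt}=\dot{\uu}_{\varepsilon,\tanExt}-\uu_{\varepsilon,\normalExt}\normalExt'$; the $\frac{1}{\varepsilon}H$ term from $b_\varepsilon$ and the $\frac{1}{\varepsilon}H$ term from $L_\varepsilon[\thetaa]$ then add up to $\frac{1}{\varepsilon}\int H(\cdot)\,\vv_{\normalExt}^2\geq 0$, and similarly the two $\frac{1}{\varepsilon}\partial_z\qq$ terms combine into $\frac{1}{\varepsilon}\int(\partial_z\qq(\cdot)\vv_{\tanExt})\cdot\vv_{\tanExt}\geq 0$. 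These sign-definite combinations are dropped, and what remains is controlled by the uniform bounds on $\frac{1}{\varepsilon}R_{\normalExt}(\uu_\varepsilon)$, $\frac{1}{\varepsilon}S_{\tanExt}(\uu_\varepsilon)$ (which you correctly identified) together with coercivity of $a$. The point is that the $\frac{1}{\varepsilon}$ singularities are not bounded, they are cancelled by a smart test function.
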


\begin{proof}
    When \cref{A1} holds, the variational formulation \eqref{FVMDer} solved by $\dot{\uu}_\varepsilon$ may be rewritten as \eqref{FVMDerBis}.
    Since the map $\thetaa\mapsto L_\varepsilon[\thetaa]$ is linear from $\Cc^1_b(\mathbb{R}^d)$ to $\Xx^*$, one gets that the map $\thetaa \mapsto \dot{\uu}_\varepsilon(\Omega)[\thetaa] \in \Xx$ is linear as well, which directly leads to the desired result. 
    
    Regarding boundedness of $\{\dot{\uu}_\varepsilon\}_\varepsilon$, the key ingredient is the choice of the right test function. Let us introduce
    $$
      \tilde{\uu}_\varepsilon := \left( \uu_{\varepsilon,\normalExt'}-\gG_{\normalExt}' \right)\normalExt - \uu_{\varepsilon,\normalExt}\normalExt'\:.
    $$
    It is clear that $\tilde{\uu}_\varepsilon\in\Xx$, and that one has the following estimation
    $$
      \norml \tilde{\uu}_\varepsilon \normr_{\Xx} \leq C\left( 1 + \norml \uu_\varepsilon \normr_{\Xx} \right) \:.
    $$
    Now, as $\normalExt' \perp \normalExt$, if $\vv\in\Xx$ is defined by $\vv = \dot{\uu}_\varepsilon + \tilde{\uu}_\varepsilon$, then 
    $$
        \vv_{\normalExt} =  \dot{\uu}_{\varepsilon,\normalExt} + \uu_{\varepsilon,\normalExt'}-\gG_{\normalExt}' \:, \quad\quad
        \vv_{\tanExt} =  \dot{\uu}_{\varepsilon,\tanExt} - \uu_{\varepsilon,\normalExt}\normalExt'\:.
    $$
    Therefore, due to positivity of $H$ and $\partial_z \qq$, combined with uniform boundedness of both $\frac{1}{\varepsilon}R_{\normalExt}(\uu_\varepsilon)$ in $L^2(\Gamma_C)$ and $\frac{1}{\varepsilon}S_{\tanExt}(\uu_\varepsilon)$ in $\Ll^2(\Gamma_C)$, taking such a $\vv$ as test-function in \eqref{FVMDerBis} enables to conclude.
\end{proof}

\begin{remark}
    Another approach to get around this non-differentiability issue is to modify the formulation by regularizing non-smooth functions: in this case, replacing $\maxx$ and $\qq$ by regularized versions $\text{p}_{c,+}$ and $\qq_c$, where $c$ stands for the regularization parameter, $c\to\infty$. This leads to a solution map that is Fréchet-differentiable. It can be proved, see \cite{chaudet2019phd}, that the solution of the regularized formulation $\uu_\varepsilon^c \to \uu_\varepsilon$ in $\Xx$, and that in addition, when \cref{A1} holds, the shape derivative $d\mathbf{u}_\varepsilon^c \to d\mathbf{u}_\varepsilon$ in $\Ll^2(\Omega)$.
\end{remark}

\begin{remark}
	Uniform boundedness of $\{d\mathbf{u}_\varepsilon\}_\varepsilon$ implies that the sequence converges weakly in $\Ll^2(\Omega)$ (up to a subsequence) when $\varepsilon\to 0$. However, it seems difficult to characterize this weak limit.
\end{remark}

\subsection{Computation of the shape derivative of a general criterion}

Now that shape sensitivity of the penalty formulation have been studied, one may go back to our initial shape optimization problem \eqref{ShapeOPT}. Let us focus on cost functionals of the rather general type:
\begin{equation}
  J_\varepsilon(\Omega) := \int_\Omega j(\uu_\varepsilon(\Omega)) + \int_{\partial\Omega} k(\uu_\varepsilon(\Omega))\:,
  \label{GeneralJType}
\end{equation}
where $\uu_\varepsilon(\Omega)$ is the solution of \eqref{FV} on $\Omega$. The functions $j,k$ are $\pazocal{C}^1(\mathbb{R}^d,\mathbb{R})$, and their derivatives with respect to $\uu_\varepsilon$, denoted $j'$, $k'$, are Lipschitz. It is also assumed that those functions and their derivatives satisfy, for all $u$, $v \in \mathbb{R}^d$,
\begin{equation}
    |j(u)| \leq C\left(1+|u|^2\right) \qquad
    |k(u)| \leq C\left(1+|u|^2\right)
  \label{Condjk}
\end{equation}
\begin{equation}
    |j'(u)\cdot v| \leq C |u\cdot v| \qquad
    |k'(u)\cdot v| \leq C|u\cdot v| 
  \label{Condj'k'}
\end{equation}
for some constants $C>0$.
From the shape differentiability of $\uu_\varepsilon$, one may deduce the following results, see for example \cite{henrot2006variation}.

\begin{theo}  \label{ThmDJVol}
    When \cref{A0} and \cref{A1} hold, 
    $J_\varepsilon$ is defined by \eqref{GeneralJType} and satisfy \eqref{Condjk} and \eqref{Condj'k'}
    and $\uu_\varepsilon$ is the solution of \eqref{FV}, then 
    $J_\varepsilon$ 
    is shape differentiable at $\Omega$ and its derivative in the direction $\thetaa\in\Cc^1_b(\mathbb{R}^d)$ writes:
    \begin{equation}
            dJ_\varepsilon(\Omega)[\thetaa] = \int_\Omega j'(\uu_\varepsilon)\cdot \dot{\uu}_\varepsilon + \: j(\uu_\varepsilon)\divv\thetaa 
            + \int_{\partial\Omega} k'(\uu_\varepsilon)\cdot \dot{\uu}_\varepsilon + k(\uu_\varepsilon) \divv_\Gamma\thetaa.
        \label{DJVol0}
    \end{equation}
    with $\dot{\uu}_\varepsilon$ solution of \eqref{FVMDerBis}.
\end{theo}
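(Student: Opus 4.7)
The plan is to reduce everything to the reference domain via the usual change of variables, then pass to the limit $t\searrow 0$ using the strong material derivative established in \cref{ThmExistMDer} together with Taylor expansions of the Jacobians.

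First I would transport $J_\varepsilon(\Omega(t))$ back to $\Omega$. With the notations of the excerpt,
$$
   J_\varepsilon(\Omega(t)) = \int_\Omega j(\uu_\varepsilon^t)\,\JacV(t) + \int_{\partial\Omega} k(\uu_\varepsilon^t)\,\JacB(t),
$$
so the difference quotient splits into four natural pieces:
$$
   \frac{J_\varepsilon(\Omega(t))-J_\varepsilon(\Omega)}{t}
   = \int_\Omega \frac{j(\uu_\varepsilon^t)-j(\uu_\varepsilon)}{t}\JacV(t) + \int_\Omega j(\uu_\varepsilon)\frac{\JacV(t)-1}{t}
$$
$$
   + \int_{\partial\Omega} \frac{k(\uu_\varepsilon^t)-k(\uu_\varepsilon)}{t}\JacB(t) + \int_{\partial\Omega} k(\uu_\varepsilon)\frac{\JacB(t)-1}{t}.
$$
The two Jacobian terms are handled by the standard Taylor expansions $\JacV(t)=1+t\divv\thetaa+O(t^2)$ and $\JacB(t)=1+t\divv_\Gamma\thetaa+O(t^2)$ already recalled in the proof of \cref{ThmExistMDer}, together with the growth condition \eqref{Condjk} which gives $j(\uu_\varepsilon)\in L^1(\Omega)$ and $k(\uu_\varepsilon)\in L^1(\partial\Omega)$ via $\uu_\varepsilon\in\Xx$ and the trace theorem; these contribute the $j(\uu_\varepsilon)\divv\thetaa$ and $k(\uu_\varepsilon)\divv_\Gamma\thetaa$ terms in \eqref{DJVol0}.

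For the two ``derivative'' pieces I would use the integral form of the mean value theorem,
$$
   j(\uu_\varepsilon^t)-j(\uu_\varepsilon) = \int_0^1 j'\bigl(\uu_\varepsilon+s(\uu_\varepsilon^t-\uu_\varepsilon)\bigr)\cdot(\uu_\varepsilon^t-\uu_\varepsilon)\,ds,
$$
and write the volume integrand as $\int_0^1 j'(\uu_\varepsilon+s(\uu_\varepsilon^t-\uu_\varepsilon))\cdot \ww_\varepsilon^t\,ds$ where $\ww_\varepsilon^t=(\uu_\varepsilon^t-\uu_\varepsilon)/t$. By \cref{ThmExistMDer}, $\ww_\varepsilon^t\to \dot{\uu}_\varepsilon$ strongly in $\Xx$, hence in $\Ll^2(\Omega)$, and $\uu_\varepsilon^t\to\uu_\varepsilon$ strongly in $\Xx$, so that $\uu_\varepsilon+s(\uu_\varepsilon^t-\uu_\varepsilon)\to\uu_\varepsilon$ in $\Ll^2(\Omega)$. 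Lipschitz continuity of $j'$ combined with the bound \eqref{Condj'k'} provides a $L^2$-dominating function, which by Lebesgue's theorem yields $j'(\uu_\varepsilon+s(\uu_\varepsilon^t-\uu_\varepsilon))\to j'(\uu_\varepsilon)$ in $\Ll^2(\Omega)$ uniformly in $s\in[0,1]$; together with the strong $\Ll^2$-convergence of $\ww_\varepsilon^t$ and the uniform boundedness of $\JacV(t)$ in $L^\infty(\Omega)$, the product converges in $L^1(\Omega)$ to $j'(\uu_\varepsilon)\cdot\dot{\uu}_\varepsilon$.

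The boundary piece is analogous but is the main technical obstacle, because the convergence of $\ww_\varepsilon^t$ is only in $\Xx$ and one needs it on the trace. I would invoke the trace operator $\Xx\to\Hh^{1/2}(\partial\Omega)$ and the compact embedding $\Hh^{1/2}(\partial\Omega)\hookrightarrow\Ll^2(\partial\Omega)$ to upgrade the strong $\Xx$-convergence of $\ww_\varepsilon^t$ to strong $\Ll^2(\partial\Omega)$-convergence of its trace, and similarly for $\uu_\varepsilon^t$; the assumption \eqref{Condj'k'} on $k'$ together with its Lipschitz continuity then reproduces the dominated-convergence argument above and gives the contribution $\int_{\partial\Omega} k'(\uu_\varepsilon)\cdot\dot{\uu}_\varepsilon$. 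Finally, \cref{A1} is exactly what makes $\thetaa\mapsto\dot{\uu}_\varepsilon$ linear continuous from $\Cc^1_b(\mathbb{R}^d)$ to $\Xx$ (\cref{CorExistSDer}), so the resulting map $\thetaa\mapsto dJ_\varepsilon(\Omega)[\thetaa]$ inherits linearity and continuity, establishing shape differentiability of $J_\varepsilon$ with the claimed formula \eqref{DJVol0}.
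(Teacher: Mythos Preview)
Your proposal is correct and is precisely the standard argument (transport to the reference domain, Taylor-expand the Jacobians, apply the chain rule through the strong material derivative of \cref{ThmExistMDer}, and invoke \cref{A1} via \cref{CorExistSDer} for linearity in $\thetaa$) that the paper simply defers to \cite{henrot2006variation} without writing out; the paper gives no proof of its own for this theorem beyond that citation. One minor remark: for the boundary piece you do not actually need the compact embedding $\Hh^{1/2}(\partial\Omega)\hookrightarrow\Ll^2(\partial\Omega)$, since you already have \emph{strong} convergence $\ww_\varepsilon^t\to\dot{\uu}_\varepsilon$ in $\Xx$ and the trace operator is continuous into $\Ll^2(\partial\Omega)$; compactness would only be needed to upgrade weak convergence.
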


\begin{remark}
  From \cref{CorExistSDer}, one automatically gets that $dJ_\varepsilon$ is uniformly bounded in $\varepsilon$. Therefore, formula \eqref{DJVol0} produces usable shape derivatives, regardless how small $\varepsilon$ gets.    
\end{remark}


From a numerical point of view, this last expression contains a number of difficulties (mainly the right hand side of \eqref{FVMDerBis} and divergence of $\thetaa$) that can be circumvented through simple transformations. 
Introducing the adjoint state, it is possible to rewrite \eqref{DJVol0} avoiding the construction of the right hand side of \eqref{FVMDerBis} and resulting in an expression having only boundary integrals with integrand involving only $\thetaa$.

In the context of problem \eqref{FV} with the functional $J_\varepsilon$, the associated \textit{adjoint state} $\pp_\varepsilon \in \Xx$ is defined as the solution of: 
\begin{equation}
    b_\varepsilon(\pp_\varepsilon,\vv) = -\int_\Omega j'(\uu_\varepsilon)\cdot \vv - \int_{\partial\Omega} k'(\uu_\varepsilon)\cdot \vv \qquad \forall\vv \in \Xx\:.
    \label{FVA}
\end{equation}
Note that by application of Lax-Milgram lemma, existence and uniqueness of $\pp_\varepsilon$ are guaranteed. Using this adjoint state, one is able to get a boundary integral expression of the following form for $dJ_\varepsilon(\Omega)[\thetaa]$.
\begin{theo}
  Suppose $\Omega$ is of class $\pazocal{C}^2$. Then, under the hypothesis of \cref{ThmDJVol}, with $\uu_\varepsilon$, $\pp_\varepsilon\in \Hh^2(\Omega)\cap\Xx$ solutions of \eqref{FV} and \eqref{FVA} respectively,  one has:
  \begin{equation}
      dJ_\varepsilon(\Omega)[\thetaa] = \int_{\partial \Omega} \mathfrak{A}_\varepsilon \: (\thetaa\cdot\normalInt) + \int_{\Gamma_N} \mathfrak{B}_\varepsilon \: (\thetaa\cdot\normalInt) + \int_{\Gamma_C} \mathfrak{C}_\varepsilon \: (\thetaa\cdot\normalInt) \:,
    \label{DJ}
  \end{equation} 
  where $\mathfrak{A}_\varepsilon$, $\mathfrak{B}_\varepsilon$ and $\mathfrak{C}_\varepsilon$ depend on $\uu_\varepsilon$, $\pp_\varepsilon$, their gradients, and the data.
  \label{ThmDJ}
\end{theo}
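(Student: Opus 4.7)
My strategy is the standard adjoint+Hadamard route: first eliminate the material derivative $\dot{\uu}_\varepsilon$ from \eqref{DJVol0} via the adjoint state $\pp_\varepsilon$, then convert every remaining volume/tangential-divergence contribution into a boundary integral whose integrand depends on $\thetaa$ only through $\thetaa\cdot\normalInt$ (as forced by Hadamard's structure theorem).

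For the first step, observe that the bilinear form $b_\varepsilon$ defined in the proof of \cref{ThmExistMDer} is symmetric: $a$ is symmetric by construction, and both boundary contributions involve the symmetric matrices $H(\cdot)I_d$ and $\partial_z\qq(\cdot,\cdot)$ (the latter is symmetric on each of $\pazocal{J}^-$ and $\pazocal{J}^+$ by direct inspection). Testing \eqref{FVMDerBis} with $\vv=\pp_\varepsilon$ and \eqref{FVA} with $\vv=\dot{\uu}_\varepsilon$ and comparing gives
\begin{equation*}
  \int_\Omega j'(\uu_\varepsilon)\cdot \dot{\uu}_\varepsilon + \int_{\partial\Omega} k'(\uu_\varepsilon)\cdot \dot{\uu}_\varepsilon \;=\; -\,b_\varepsilon(\pp_\varepsilon,\dot{\uu}_\varepsilon) \;=\; -\,L_\varepsilon[\thetaa](\pp_\varepsilon).
\end{equation*}
Substituting this identity into \eqref{DJVol0} eliminates $\dot{\uu}_\varepsilon$ and yields
\begin{equation*}
  dJ_\varepsilon(\Omega)[\thetaa] = -L_\varepsilon[\thetaa](\pp_\varepsilon) + \int_\Omega j(\uu_\varepsilon)\,\divv\thetaa + \int_{\partial\Omega} k(\uu_\varepsilon)\,\divv_\Gamma\thetaa,
\end{equation*}
where every occurrence of $\thetaa$ is now explicit.

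The second step is to manipulate each term so that only $\thetaa\cdot\normalInt$ survives on $\partial\Omega$. For the bulk integrals, combining $\int_\Omega f\divv\thetaa + \int_\Omega \grad f\cdot\thetaa = \int_{\partial\Omega} f(\thetaa\cdot\normalInt)$ (applied to $f=j(\uu_\varepsilon)$, $f=\ff\cdot\pp_\varepsilon$, etc.) immediately converts the $\ff$-terms of $L_\varepsilon[\thetaa]$ and the $j(\uu_\varepsilon)\divv\thetaa$ term into a single boundary integral on $\partial\Omega$, up to a residue involving $\grad\pp_\varepsilon\cdot\thetaa$ and $\grad\uu_\varepsilon\cdot\thetaa$. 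The elasticity bilinear form $a'(\uu_\varepsilon,\pp_\varepsilon)$ is handled exactly as in the linear-elasticity case treated in \cite[Section~3.5]{sokolowski1992introduction}: using $\uu_\varepsilon,\pp_\varepsilon\in\Hh^2(\Omega)$, one integrates by parts, invokes the strong forms $-\Divv\sigmaa(\uu_\varepsilon)=\ff$ and the analogous adjoint equation in $\Omega$, and then combines with the remaining bulk residues; this step produces the coefficient $\mathfrak{A}_\varepsilon$ on $\partial\Omega$ and adds natural-boundary contributions on $\Gamma_N$ (using \eqref{FF0:3}) that build $\mathfrak{B}_\varepsilon$. The surface integral $\int_{\partial\Omega} k(\uu_\varepsilon)\divv_\Gamma\thetaa$ and the analogous $\tauu$-term from $L_\varepsilon[\thetaa]$ are reorganized via the tangential Stokes identity
\begin{equation*}
  \int_{\partial\Omega} f\,\divv_\Gamma\thetaa = -\int_{\partial\Omega} \grad_\Gamma f\cdot\thetaa + \int_{\partial\Omega} \kappa\,f\,(\thetaa\cdot\normalInt),
\end{equation*}
with $\kappa$ the mean curvature, and the tangential gradient contribution is absorbed by pairing with the corresponding $\grad f\cdot\thetaa$ piece.

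The main obstacle is the $\Gamma_C$-portion of $L_\varepsilon[\thetaa](\pp_\varepsilon)$, which carries the genuinely non-standard ingredients: the terms $R_{\normalExt}(\uu_\varepsilon)\,(\pp_\varepsilon\cdot\normalExt')$, $S_{\tanExt}(\uu_\varepsilon)\cdot\pp_{\varepsilon,\tanExt'}$, the Jacobian-type $\divv_\Gamma\thetaa$ factors, and the derivatives $\uu_{\varepsilon,\normalExt'}-\gG_{\normalExt}'$ and $\grad(\mathfrak{F}s)\thetaa$. To turn these into $(\thetaa\cdot\normalInt)$-integrands I will use the pointwise identities
\begin{equation*}
  \gG_{\normalExt}' = -\thetaa\cdot\normalExt,\quad \vv_{\tanExt'} = -\vv_{\normalExt'}\normalExt - \vv_{\normalExt}\normalExt',\quad \normalExt' = (\grad\normalExt)\thetaa,
\end{equation*}
together with the decomposition $\thetaa=(\thetaa\cdot\normalInt)\normalInt+\thetaa_{\tanInt}$, the penalty strong forms \eqref{CL2:all} (which let one rewrite $\tfrac{1}{\varepsilon}R_{\normalExt}(\uu_\varepsilon)$ and $\tfrac{1}{\varepsilon}S_{\tanExt}(\uu_\varepsilon)$ as $-\sigmaa_{\normalInt\normalExt}(\uu_\varepsilon)$ and $-\sigmaa_{\normalInt\tanExt}(\uu_\varepsilon)$), and tangential integration by parts on $\Gamma_C$ to shift derivatives off $\thetaa$. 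Hadamard's structure theorem guarantees that all residual contributions proportional to $\thetaa_{\tanInt}$ must cancel; carrying out this cancellation explicitly — which uses the $\Hh^2$ regularity of $\uu_\varepsilon$ and $\pp_\varepsilon$ to make the tangential manipulations rigorous — produces the coefficient $\mathfrak{C}_\varepsilon$ and completes the representation \eqref{DJ}.
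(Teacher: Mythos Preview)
Your proposal is correct and follows essentially the same route as the paper: use the symmetry of $b_\varepsilon$ together with \eqref{FVMDerBis} and \eqref{FVA} to eliminate $\dot{\uu}_\varepsilon$ and reach the distributed expression $dJ_\varepsilon(\Omega)[\thetaa] = -L_\varepsilon[\thetaa](\pp_\varepsilon) + \int_\Omega j(\uu_\varepsilon)\divv\thetaa + \int_{\partial\Omega} k(\uu_\varepsilon)\divv_\Gamma\thetaa$, then exploit the $\Hh^2$ regularity of $\uu_\varepsilon$, $\pp_\varepsilon$ to integrate by parts into the Hadamard boundary form. The paper phrases the last step as ``integrating by parts and using the variational formulations \eqref{FV} and \eqref{FVA} with well chosen test-functions'' and simply records the resulting coefficients \eqref{EqABC}, whereas you spell out the mechanics (divergence theorem on the bulk terms, tangential Stokes on the surface terms, the penalty strong forms \eqref{CL2:all} on $\Gamma_C$); this is the same argument at a finer level of detail.
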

\begin{proof}
  Due to \cref{ThmDJVol}, when considering \eqref{FVA} with $\vv=\dot{\uu}_\varepsilon\in\Xx$ as test-function, one gets:
  $$
    dJ_\varepsilon(\Omega)[\thetaa] = -b_\varepsilon(\pp_\varepsilon,\dot{\uu}_\varepsilon) + \int_\Omega j(\uu_\varepsilon)\divv\thetaa + \int_{\partial\Omega} k(\uu_\varepsilon) \divv_\Gamma\thetaa \:.
  $$
  Now, noting that $b_\varepsilon$ is symmetric and taking $\vv=\pp_\varepsilon\in\Xx$ in \eqref{FVMDer} leads to
  \begin{equation}
    dJ_\varepsilon(\Omega)[\thetaa] = -L_\varepsilon[\thetaa](\pp_\varepsilon) + \int_\Omega j(\uu_\varepsilon)\divv\thetaa + \int_{\partial\Omega} k(\uu_\varepsilon) \divv_\Gamma\thetaa \:.
    \label{DJVol}
  \end{equation}
  From that point, due to the additional regularity assumption on $\uu_\varepsilon$ and $\pp_\varepsilon$, integrating by parts and using the variational formulations \eqref{FV} and \eqref{FVA} with well chosen test-functions yields the desired result, with
  \begin{equation}\label{EqABC}
  \left\{
      \begin{aligned}
      \mathfrak{A}_\varepsilon &= j(\uu_\varepsilon) + (\kappa +\partial_{\normalInt})k(\uu_\varepsilon) + \Aa:\epsilonn(\uu_\varepsilon):\epsilonn(\pp_\varepsilon) - \ff\pp_\varepsilon \:,\\[0.5em]
      \mathfrak{B}_\varepsilon &= -(\kappa +\partial_{\normalInt})\left( \tauu \pp_\varepsilon \right) \:,\\
      \mathfrak{C}_\varepsilon &=  \mathfrak{C}_{\varepsilon}^{\normalExt} + \mathfrak{C}_{\varepsilon}^{\tanExt} = \frac{1}{\varepsilon}(\kappa +\partial_{\normalInt})\left(R_{\normalExt}(\uu_\varepsilon)\pp_{\varepsilon, \normalExt}\right) +\frac{1}{\varepsilon}(\kappa +\partial_{\normalInt})\left(S_{\tanExt}(\uu_\varepsilon)\pp_{\varepsilon,\tanExt}\right) \:.
  \end{aligned}
  \right.
  \end{equation}
  In the previous formulae, $\kappa$ denotes the mean curvature on $\partial\Omega$, and $\partial_{\normalInt}$ stands for the normal derivative with respect to $\normalInt$.
\end{proof}

\begin{remark}\label{RemNormalDefField}
  Expression \eqref{DJVol} is often referred to as the \textit{distributed shape deri\-vative}, and it is always valid as it only requires $\uu_\varepsilon$, $\pp_\varepsilon\in\Xx$. The reader is referred to \cite{hiptmair2015comparison,laurain2016distributed} for more details about distributed shape derivatives. The additional regularity assumption enables to get an explicit expression that fits the Hadamard-Zolésio structure theorem. This structure of the shape derivative suggests to consider deformation fields $\thetaa$ of the form $\thetaa=\theta \normalInt$, where the normal $\normalInt$ has been extended to $\mathbb{R}^d$ (not necessarily using the oriented distance function to $\partial\Omega$), which is possible when $\partial\Omega$ is at least $\pazocal{C}^1$, see \cite{henrot2006variation}.
\end{remark}

\begin{remark}
  The first two terms in \eqref{DJ} are exactly the same as for the elasticity formulation without contact. There are two additional components coming from the contact conditions, namely $\mathfrak{C}_\varepsilon^{\normalExt}$, stemming from the normal constraint, and $\mathfrak{C}_\varepsilon^{\tanExt}$, stemming from the tangential constraint. Obviously, these are the only terms involving $\normalExt$. When considering problems without contact, those last two terms cancel, while in the case of pure sliding contact, only $\mathfrak{C}_\varepsilon^{\tanExt}\equiv 0$. As for contact problems without gap (see for instance \cite{maury2017shape}), in the expression of $R_{\normalExt}$ the gap $\gG_{\normalExt}$ is simply set to 0, and \eqref{DJ}, \eqref{EqABC} coincides with the derivative in \cite{maury2017shape}. Moreover, note that neglecting the term with $\mathfrak{C}_\varepsilon$ (imposing $\thetaa=0$ on $\Gamma_C$) is equivalent to excluding the contact zone from the optimization process. In other words, the derived expression \eqref{EqABC} is rather general and adapts to many situations: sliding or frictional contact, contact with or without gap, optimizing or not the contact zone, etc.
\end{remark}
%

  We decided to write the contact boundary conditions using the normal $\normalExt$ to the rigid foundation instead of the normal $\normalInt$ to $\partial\Omega$ because it leads to a simpler expression for $dJ_\varepsilon$. Indeed, when differentiating our formulation with respect to the shape, as $\normalExt$ and $\gG_{\normalExt}$ do not depend on $\Omega$, the contact boundary condition can be treated like any Neumann condition. Alternatively, when differentiating the classical formulation, based on $\normalInt$ for the contact boundary conditions, additional terms involving the shape derivatives of the gap, $d\mathbf{g}_{\normalInt}$, and the normal, $d\mathbf{n_o}$, appear in the shape derivative of $J_\varepsilon$ (see \cite{maury2017shape} in the case with no gap). It turns out that these shape derivatives are quite technical to handle in practice. As these two formulations solve the same mechanical problem, see \cref{RemHypHPP}, the simplified expression for $dJ_\varepsilon$, \eqref{DJ} with \eqref{EqABC}, is valid for both formulations.

\section{Numerical results}

\subsection{Shape optimization algorithm}
Following the usual approach, 
\cite{allaire2004structural, maury2017shape},
the algorithm proposed here to minimize $J_\varepsilon(\Omega)$ is a descent method, based on the shape derivative. 
Starting from an initial shape $\Omega^0\subset D$, using the cost functional derivative \eqref{DJ}, the algorithm generates a sequence of shapes $\Omega^k\in \pazocal{U}_{ad}$ such that the real-valued sequence $\{J_\varepsilon(\Omega^k)\}_k$ decreases. Each shape $\Omega^k$ is represented explictly, as a meshed subdomain of $D$, as well as implicitly, as the zero level set of some function $\phi^k$. The explicit representation enables to apply all boundary conditions rigorously, while the implicit representation enables to make the shape evolve smoothly from an iteration to the next by solving the following Hamilton-Jacobi equation on $[0,T]\times \mathbb{R}^d$:
\begin{equation}
    \begin{aligned}
    &\frac{\partial\phi}{\partial t} + \theta |\grad\phi| = 0\:, \\
    &\phi(0,x) = \tilde{\phi}(x)\:,
    \label{HamJac}
    \end{aligned}
\end{equation}
where $T$ is strictly positive, $\tilde{\phi}$ is a given initial condition,  and $\theta$ is the norm of the normal deformation field (see \cref{RemNormalDefField}). The reader is referred to the pioneer work \cite{allaire2004structural} for more details about shape optimization using the level set method.
\vspace{1em}

As mentioned earlier, the method to generate the sequence $\{\Omega^k\}_k$ is based on a gradient descent. It consists in several successive steps.
\begin{enumerate}[leftmargin=3em, topsep=4pt]
    \item Find $\uu_\varepsilon^{k}$ solution of \eqref{FV} on $\Omega^k$.
    \item Find the adjoint state $\pp_\varepsilon^{k}$ solution of \eqref{FVA} on $\Omega^k$.
    \item Find a descent direction with $\thetaa^k=\theta^k \normalInt^k$ using \eqref{DJ} and \eqref{EqABC}.
    \item Update the level set function $\phi^{k+1}$ by solving \eqref{HamJac} on some interval $[0,T^k]$ with $T^k>0$, taking $\theta=\theta^k$ as velocity field and $\tilde{\phi}=\phi^k$ as initial condition.
    \item Cut the mesh of $D$ around $\{\phi^{k+1}\!=\!0\}$ to get an explicit representation of $\Omega^{k+1}$.
\end{enumerate}

\begin{remark}
    In step 4, the real number $T^k$ is chosen such that the monotonicity of $\{J_\varepsilon(\Omega^k)\}_k$ is guaranteed at each iteration. This numerical trick tries to ensure a descent direction, even in situations where \cref{A1} is not verified. Indeed, in such situations, expression \eqref{DJ} will not be an accurate representation of the shape derivative, however it still can provide a valid descent direction.
\end{remark}

\begin{remark}
    Even though such algorithms prove themselves very efficient from the numerical point of view, there are a few limitations. First, note that, in general, problem $\eqref{ShapeOPT}$ is not well-posed and $J_\varepsilon$ is not convex. Thus, using a descent method to try and solve it necessarily leads to finding a local minimum that is highly dependant on the initial shape $\Omega^0$. The reader is referred to \cite{allaire2004structural} for a more detailed discussion on that matter. Second, the algorithm may generate shapes for which the expression of $dJ_\varepsilon$ is inaccurate (e.g. \cref{A1} is not verified) and, in the worst case scenario, from which no descent direction $\thetaa^k$ can be obtained. In such cases, which have not been encountered in practice, the algorithm will stop and no solution will be found.
\end{remark}


\paragraph{Some details about the implementation.}

Although it is not the purpose of this work, we present summarily some aspects of the implementation. Numerical experiments are performed with the code MEF++, developped at the GIREF (Groupe Interdisciplinaire de Recherche en \'El\'ements Finis, Université Laval). Problems \eqref{FV} and \eqref{FVA} are solved using the finite element method using Lagrange $P^2$ finite elements. The Hamilton-Jacobi type equation is solved on a secondary grid, using the second order finite difference scheme presented in \cite{OshSet1988}, with Neumann boundary conditions on $\partial D$. The reader is referred to the rather recent work \cite{chouly2013convergence} for finite element resolution and error estimate of the penalty formulation of contact problems in linear elasticity, and to \cite{Set1996,QuaVal2008} for details about level set methods and their numerical treatment using finite differences.


\subsection{Specific context}

Even though the method could deal with any functional $J_\varepsilon$ of the general type \eqref{GeneralJType}, we focus here on the special case of a linear combination of the compliance and the volume (with some weight coefficients $\alpha_1$ and $\alpha_2$).
\begin{equation*}
    J_\varepsilon(\Omega) = \alpha_1 C(\Omega) + \alpha_2 \textit{Vol}(\Omega)= \int_\Omega (\alpha_1\ff \uu_\varepsilon(\Omega)+\alpha_2)+ \int_{\Gamma_N} \alpha_1\tauu \uu_\varepsilon(\Omega) \:.
\end{equation*}
Indeed, from the engineering point of view, minimizing such a $J_\varepsilon$ means finding the best compromise (in some sense) between weight and stiffness. 

The materials are assumed to be isotropic and obeying Hooke's law (linear elastic), that is:
$$
    \sigmaa(\uu) = \Aa:\epsilonn(\uu) = 2\mu \epsilonn(\uu)+\lambda \divv\uu \:, 
$$
where $\lambda$ and $\mu$ are the Lamé coefficients of the material, which can be expressed in terms of Young's modulus $E$ and Poisson's ratio $\nu$:
\begin{equation*}
    \lambda = \frac{E\nu}{(1+\nu)(1-2\nu)}\:, \hspace{1em} \mu = \frac{E}{2(1+\nu)}\:.
\end{equation*}
Here, those constants are set to the classical academic values $E=1$ and $\nu=0.3$ and the penalty parameter $\varepsilon$ is set to $10^{-6}$. Such a value for $\varepsilon$ ensures that the solution $\uu_\varepsilon$ is close enough to the solution $\uu$ of the original contact problem.  

Concerning $D$ and the admissible shapes, at each iteration $k$, the current domain $\Omega^k$ will be contained in $D$ and its boundary $\partial\Omega^k$ will be divided as follows (the colours refer to \ref{fig:InitCanti2dCercle}):

\begin{itemize}[topsep=4pt, parsep=1pt, leftmargin=3em]
    \item $\Gamma_N^k=\Gamma_N^0$ is fixed as the orange part of $\partial D$,
    \item $\Gamma_D^k$ is the intersection of $\partial\Omega^k$ and $\hat{\Gamma}_D$, the blue part of $\partial D$, 
    \item $\Gamma_C^k$ is the transformation, through $\thetaa^k$, of the green part of $\partial D$.
\end{itemize}

Working with a formulation with no gap is quite convenient in several cases. First, when an a priori potential contact zone $\hat{\Gamma}_C\subset \partial D$ is known, then defining $\Gamma_C=\partial\Omega \cap \hat{\Gamma}_C$ enables to enforce the contact boundary to be part of $\partial D$. In such situations, see for example \cite{maury2017shape}, the boundary $\Gamma_C^k$ is either treated like $\Gamma_D^k$ (the contact area cannot be empty) or $\Gamma_N^k$ (the contact area is fixed) during the optimization process. Second, those formulations are well suited for interface problems involving several materials, see \cite{lawry2015level}. 

However, introducing a gap in the formulation allows to extend the method to situations where we want to optimize the shape of a body in contact with a rigid foundation (known a priori). Especially, the potential contact zone is included into the shape optimization process : as the contact zone is not fixed, the shape can be modified along $\Gamma_C$.

\subsection{The cantilever}

We revisit one of the most frequently presented test in shape optimization: the design of a bidimensional cantilever beam. This test differs from the usual one by the added possibility of a support of the beam through contact (sliding or frictional) with a rigid body. 
For this benchmark, the domain $D$ is the rectangular box $[0,2]\times [0,1]$ meshed with triangles, with an average number of vertices equal to 1300. The rigid foundation is the circle of radius $R=8$ and center $x_C=(1,-8)$. 
External forces are chosen such that $\ff=0$, and $\tauu=(0,-0.01)$ is applied on $\Gamma_N$ (in orange in \ref{fig:InitCanti2dCercle}). In the frictional case, $s=10^{-2}$ and $\mathfrak{F}=0.2$. And the weight coefficients in $J$ are $\alpha_1=15$, $\alpha_2=0.01$. These choices are based on the generic behavior of the model for the given data, and can be reinterpreted as searching for a stiff structure under volume constraint. Besides, since $\tauu=(0,-0.01)$, the order of magnitude of $\uu_\varepsilon$ is also $10^{-2}$, hence the difference between the orders of magnitude of $\alpha_1$ and $\alpha_2$.
\begin{figure}[h]
	\begin{center}
	\resizebox{0.65\textwidth}{!}{
    \begin{tikzpicture}

	\draw[>=latex,->] (-2,0) -- (-1.5,0);
	\draw[>=latex,->] (-2,0) -- (-2,0.5);
	\node[black] at (-1.25,0) {$x$};
 	\node[black] at (-2,0.75) {$y$};
    \node[white] at (10,0) {$\:$};
   
	\draw[black] (8.66,-0.62) arc (75:105:18);
	\draw[dartmouthgreen, very thick] (0,0) -- (8,0);
	\draw[black] (8,0) -- (8,1.6);
	\draw[orange, very thick] (8,1.6) -- (8,2.4);
	\draw[black] (8,2.4) -- (8,4);
	\draw[black] (8,4) -- (0,4);
	\draw[blue, very thick] (0,4) -- (0,0);

	\node[white] at (4,-0.3) {$\:$};
	\node[black] at (4,2) {\large{$D$}};
	\node[blue] at (0.7,2) {\large{$\hat{\Gamma}_D$}};
	\node[orange] at (7.3,2) {\large{$\Gamma_N$}};
	\node[dartmouthgreen] at (4,0.5) {\large{$\Gamma_C$}};
	\node[black] at (4,-0.5) {\large{$\Omega_{rig}$}};
	\end{tikzpicture}
    }	
	\end{center}
  	\caption{Initial geometry for the 2d cantilever.}
    \label{fig:InitCanti2dCercle}
\end{figure}
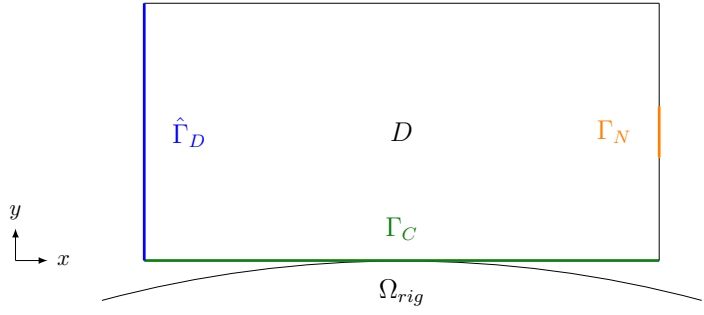

\begin{figure}[h]
  \begin{center}
    \subfloat[Initial design.]{
	\resizebox{0.46\textwidth}{!}{
	\begin{tikzpicture}
    	\node[anchor=south west,inner sep=0] at (0,0) {\includegraphics[width=\textwidth]{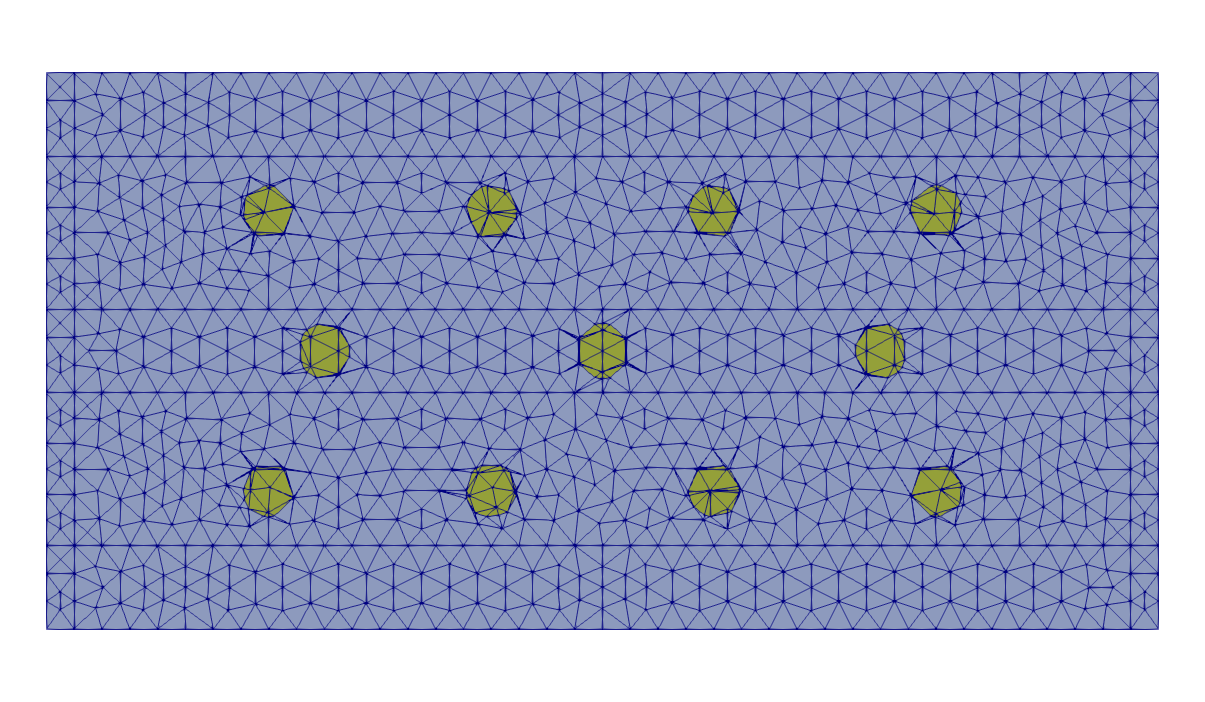}};
    	\draw[black] (13.2,0.08) arc (79:101:35);
    \end{tikzpicture}
    \label{sub:Canti2dCercleIt0}
    }
    }
    \hspace{.5em}
    \subfloat[Final design without contact.]{
	\resizebox{0.46\textwidth}{!}{
	\begin{tikzpicture}
    	\node[anchor=south west,inner sep=0] at (0,0) {\includegraphics[width=\textwidth]{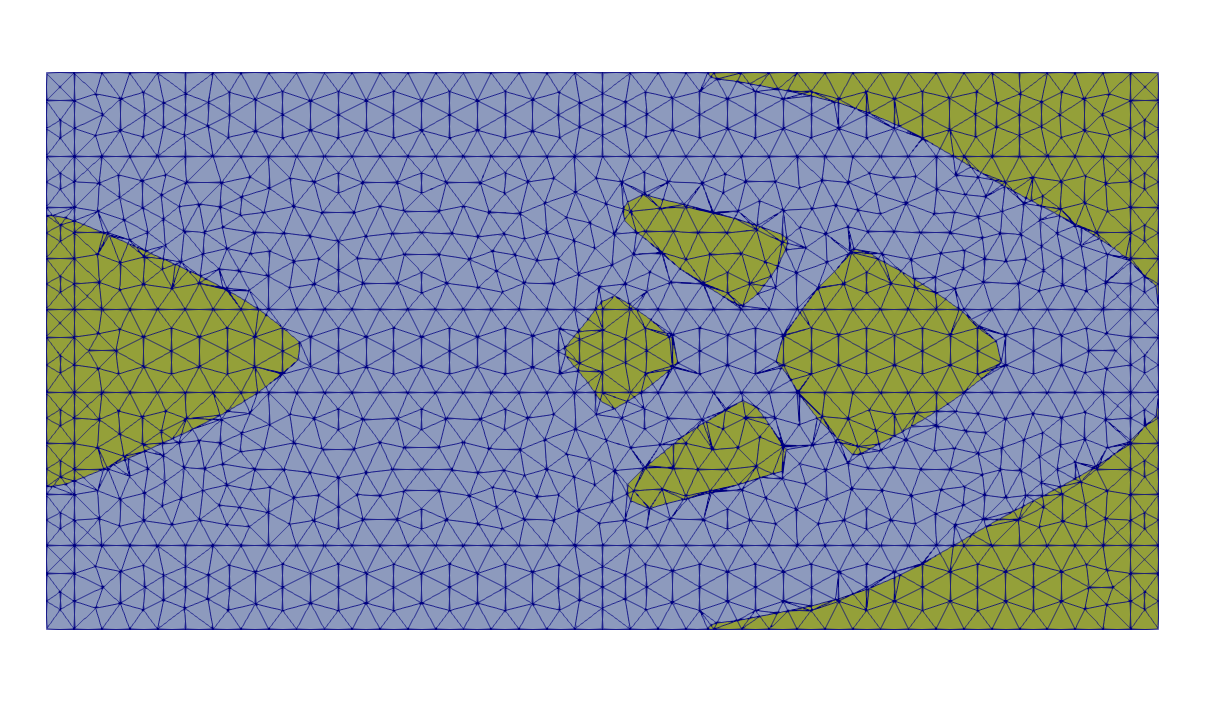}};
    	\draw[white] (13.2,0.08) arc (79:101:35);
    \end{tikzpicture}
    \label{sub:Canti2dBis}
    }
    }
    \hspace{.5em}
    \subfloat[Final design in pure sliding contact.]{
	\resizebox{0.46\textwidth}{!}{
	\begin{tikzpicture}
    	\node[anchor=south west,inner sep=0] at (0,0) {\includegraphics[width=\textwidth]{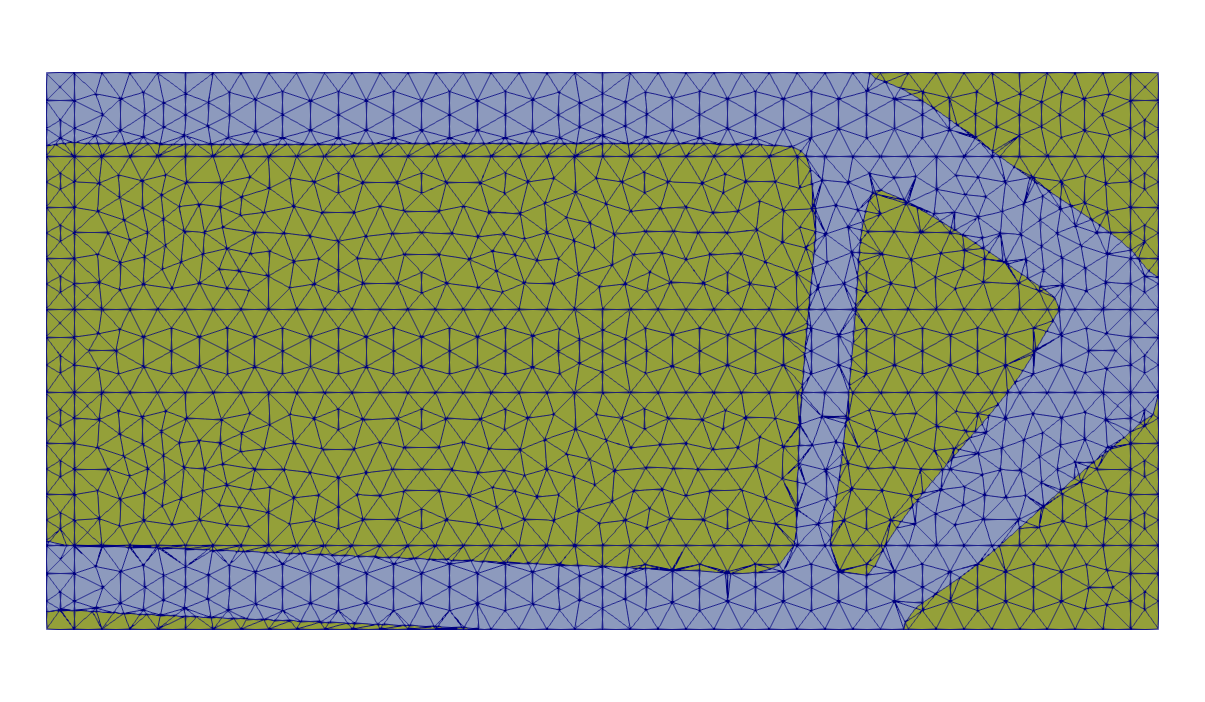}};
    	\draw[black] (13.2,0.08) arc (79:101:35);
    \end{tikzpicture}
    \label{sub:Canti2dCerclePena}
    }
    }
    \hspace{.5em}
    \subfloat[Final design in frictional contact.]{
	\resizebox{0.46\textwidth}{!}{
	\begin{tikzpicture}
    	\node[anchor=south west,inner sep=0] at (0,0) {\includegraphics[width=\textwidth]{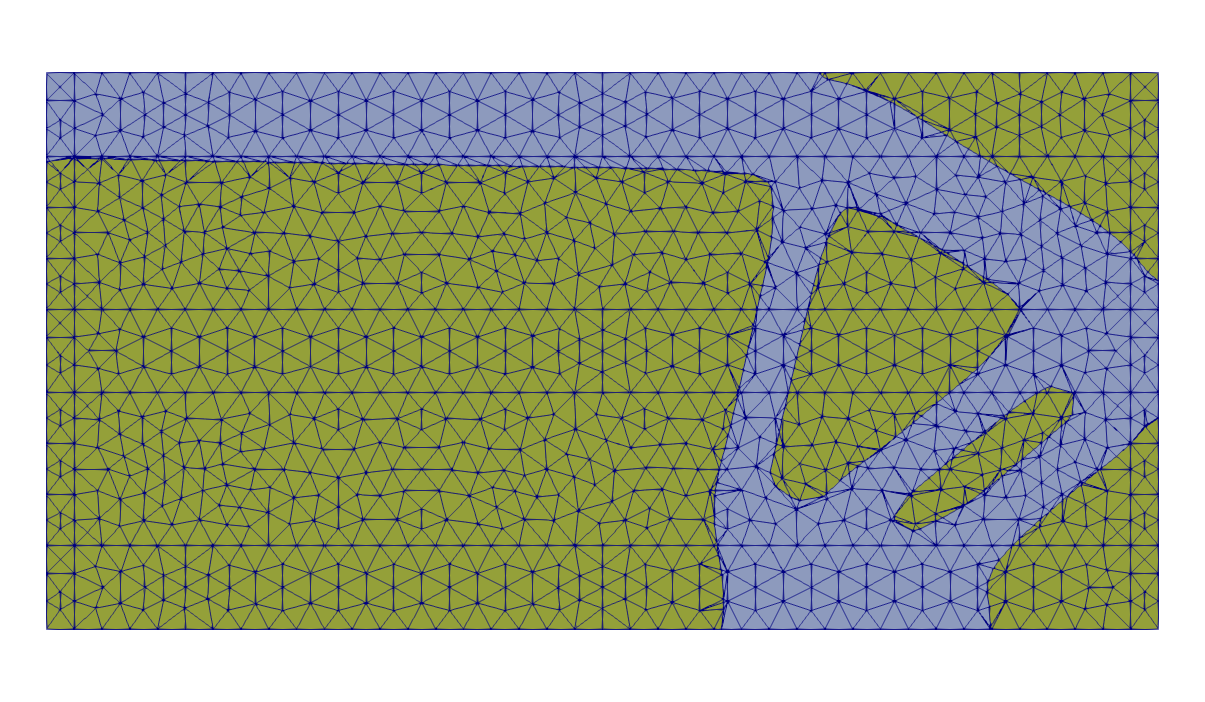}};
    	\draw[black] (13.2,0.08) arc (79:101:35);
    \end{tikzpicture}
    \label{sub:Canti2dCercleFrottPena}
    }
    }
    \end{center}
  \caption{Initial and final designs for the 2d cantilever in contact with a disk ($\Omega$ in blue, $D\setminus \Omega$ in yellow).}
  \label{fig:Canti2dCercle}
\end{figure}

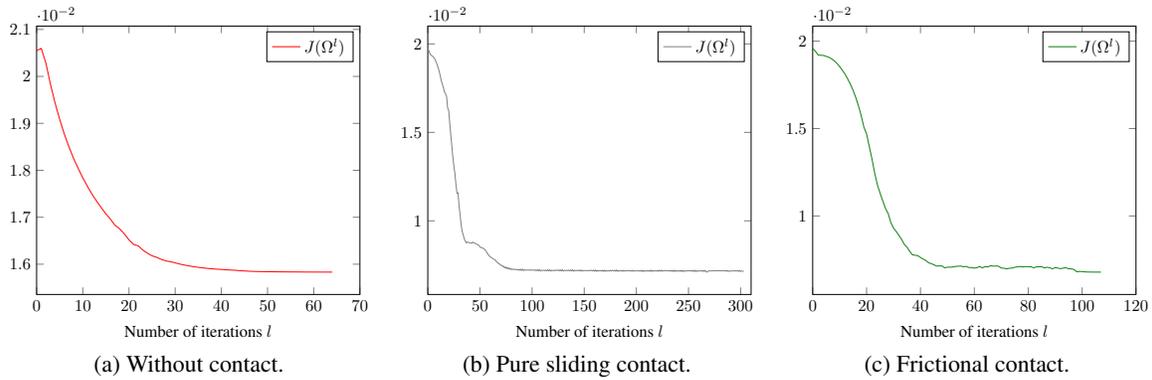
\begin{figure}
\begin{center}
    \subfloat[Without contact.]{   
	\resizebox{!}{0.27\textwidth}{
	\begin{tikzpicture}
		\begin{axis}[
    		xlabel={Number of iterations $l$},
    		xmin=0, xmax=70,
    		]
    		\addplot[color=red,mark=none] table {res_canti2d_bis.txt};
   			\legend{$J(\Omega^l)$}
   		\end{axis}
	\end{tikzpicture}
	}
	}
	%
    \subfloat[Pure sliding contact.]{   
	\resizebox{!}{0.27\textwidth}{
	\begin{tikzpicture}
		\begin{axis}[
    		xlabel={Number of iterations $l$},
    		xmin=0, xmax=310,
    		]
    		\addplot[color=gray,mark=none] table {res_canti2d_cercle_pena.txt};
   			\legend{$J(\Omega^l)$}
   		\end{axis}
	\end{tikzpicture}
	}
	}
	%
    \subfloat[Frictional contact.]{   
	\resizebox{!}{0.27\textwidth}{
	\begin{tikzpicture}
		\begin{axis}[
    		xlabel={Number of iterations $l$},
    		xmin=0, xmax=120,
    		]
    		\addplot[color=dartmouthgreen,mark=none] table {res_canti2d_cercle_frott_pena.txt};
   			\legend{$J(\Omega^l)$}
   		\end{axis}
	\end{tikzpicture}
	}
	}
\end{center}
\caption{Convergence history for the 2d cantilever.}
\label{fig:CvgCanti2d}
\end{figure}

We tested our algorithm on three different physical models: the standard elasticity model without contact (as if the disk was not here), the pure sliding model which does not take into account potential friction (i.e.$\!$ $\mathfrak{F}=0$), and the model of contact with Tresca friction.
In the case without contact, we recover the classical result, although the cantilever obtained might seem a little heavy due to our choice of coefficients $\alpha_1$ and $\alpha_2$. 
In the cases with contact, as expected, the optimal design suggested by the algorithm uses the contact with the rigid foundation as well as the clamped region to gain stiffness. More specifically, it seems that the effective contact zone (active set) has been moved to the right during the process. This makes sense because the closer the contact zone is to the zone where the load is applied, the stiffer will be the structure. 
However, in the frictional case, the tangential stress associated to friction phenomena $\sigmaa_{\normalInt\!\tanExt}$ points to the left and slightly upwards, since it is parallel to $\tanExt$ and opposed to the tangential displacement. This helps the structure to be stiffer as it compensates part of the downward motion induced by the traction $\tauu$, which might explain why the optimal shape requires only one anchor point instead of two for the pure sliding case.

The convergence history for all cases is displayed figure \ref{fig:CvgCanti2d}. Note that the convergence is much faster in the case without contact, which was predictable since the mechanical problem is easier to solve, thus the shape derivatives should be more accurate. Moreover, the final value of $J$ is around $1.6$ in the case without contact whereas it is around $0.7$ in both cases with contact. Indeed, due to the possibility of laying onto a rigid foundation, the models with contact lead to better designs.

\section{Conclusion}

In this work, we expressed conditions (similar to strict complementarity conditions) that ensure shape differentiability of the solution to the penalty formulation of the contact problem with prescribed friction based on a Tresca model. In order to achieve this goal, we relied on Gâteaux differentiability, combined with an assumption on the measure of some subsets of the contact region where non-differentiabilities may occur. Under such assumptions, we derived an expression for the shape derivative of any general functional. 
Finally, this expression has been used in a gradient descent algorithm, which we tested on a revisited version of the classical cantilever benchmark. 

As far as future work is concerned, 
the idea of working with directional shape derivatives could be extended to other formulations where non-Gateaux-differentiable operators are involved: e.g.$\!$ the Augmented Lagrangian formulation or Nitsche-based formulations.

\bibliography{references,bibliography}

\end{document}